\newtheoremstyle{plain}
{\topsep}   
{\topsep}   
{\itshape}  
{0pt}       
{\bfseries} 
{.}         
{5pt plus 1pt minus 1pt} 
{}          
\theoremstyle{plain}
\newtheorem{thm}{\bf Theorem}[section]
\newtheorem{lem}[thm]{\bf Lemma} 
\newtheorem{pro}[thm]{\bf Proposition}
\theoremstyle{definition}
\newtheorem{defn}[thm]{\bf Definition}
\newtheorem{exa}[thm]{\bf Example}
\theoremstyle{remark}
\newtheorem{rem}{\bf Remark}
\newcommand{\rk}{{\rm rank\,}}
\newcommand{\im}{{\rm Im\,}}
\newcommand{\Span}{{\rm span\,}}
\newcommand{\rd}{{\rm d}}
\begin{document}

\title{Feedback linearization  of nonlinear differential-algebraic control systems \protect\thanks{The first author is currently supported by Vidi-grant 639.032.733.} }

\author[1]{Yahao Chen*} 


\address[1]{Bernoulli Institute for Mathematics, Computer Science, and
	Artificial Intelligence, University of Groningen, The Netherlands.  } 

\corres{Email: yahao.chen@rug.nl.}

\abstract[Summary]{In this paper, we study feedback linearization problems for nonlinear differential-algebraic control systems (DACSs). We consider two kinds of feedback equivalences, namely,  external feedback equivalence, which is defined (locally) on   the  whole generalized state space, and   internal feedback equivalence, which is defined  on  the locally maximal controlled invariant submanifold (i.e., on the  set where   solutions  exist).   Necessary and sufficient conditions are given for    locally internal and the locally external feedback linearizability of DACSs with the help of a   notion  called   explicitation with driving variables, which attaches a class of ordinary differential equation control systems (ODECSs) to a given DACS. We show that  the feedback linearizability of a DACS  is closely related to  the   involutivity of the  linearizability distributions for the explicitation systems. Finally, we apply our results of feedback linearization of DACSs to both academical and practical systems.}
\keywords{differential-algebraic control systems; external  and internal feedback equivalence; feedback linearization; controlled invariant submanifolds; explicitation; constrained mecanical system}


\maketitle

\section{Introduction}\label{Sec:intro}
Consider a nonlinear  differential-algebraic control system (DACS) of the form
\begin{align}\label{Eq:DACS0}
	\Xi^u:E(x)\dot x = F(x)+G(x)u,
\end{align}
where $x\in X$ is called the generalized state and $(x,\dot x)\in TX$, where $TX$ is the tangent bundle of an open subset $X$ in $\mathbb R^n$ (or, more general, of an $n$-dimensional smooth manifold $X$), and $u\in \mathbb R^m$ is the vector of  inputs, and where $E:TX\rightarrow\mathbb R^l$, $F: X\rightarrow\mathbb R^l$ and $G:X\rightarrow \mathbb R^{l\times m}$ are smooth  maps.
The word ``smooth'' will always mean $\mathcal C^{\infty}$-smooth throughout the paper. We denote a DACS of the form (\ref{Eq:DACS0})   by $\Xi^u_{l,n,m}=(E,F,G)$  or, simply, $\Xi^u$.  
A linear DACS is of the form
\begin{align}\label{Eq:linDAEcontol}
	\Delta^u:E\dot x=Hx+Lu,
\end{align} 
where $E,H\in \mathbb R^{l\times n}$ and $L\in \mathbb R^{l\times m}$. Denote a linear DACS by $\Delta^u_{l,n,m}=(E,H,L)$ or, simply, $\Delta^u$. Linear DACSs have been studied for decades, there is a rich literature devoted to them (see, e.g., the surveys \cite{lewis1986survey,lewis1992tutorial} and textbook \cite{dai1989singular}). In the context of this paper, we will need results about canonical forms \cite{loiseau1991feedback,lebret1994proportional,chen2021geometric}, controllability \cite{berger2013controllability,cobb1984controllability,frankowska1990controllability}, and geometric subspaces \cite{geerts1993invariant,ozccaldiran1986geometric}. The motivation of studying linear and nonlinear DACSs is their frequent presence in mathematical models of practical systems as constrained mechanics \cite{rabier2000nonholonomic}, chemical processes \cite{Kumar1995}, electrical circuits \cite{riaza2008differential}, etc. 

The map $E$ of a DACS   (\ref{Eq:DACS0}) is not necessarily square (i.e., $l\neq n$) nor invertible. As a consequence,  some free variables and constrained variables can be implicitly present in the generalized state $x$ (and also some constrained control variables can exist in the input $u$). We have proposed two normal forms     to distinguish the different roles of variables for nonlinear DACSs in \cite{chen2021nfs}.
It is noted that although the free variables of $x$ may perform like an input, we will distinguish them from the real active control variables $u$.  The control $u$ can be changed  physically and actively via some actuators while the free variables in $x$ are states coming from unknown constrained forces (e.g., the friction force $F_f$ in Example \ref{Ex:1} below) or some redundancies of mathematical modeling (e.g., the Lagrange multipliers when modeling constrained mechanical systems  \cite{rabier2000nonholonomic}).  
In the case of $E(x)=I_n$, the DACS  (\ref{Eq:DACS0}) becomes  an  ordinary differential equation control system (ODECS)
\begin{align}\label{Eq:ODEs}
	\dot x = f(x)+\sum_{i=1}^{m}g_i(x)u_i,
\end{align}
where $f=F$ and $g_i$, $1\le i\le m$, being the columns of $G$, become vector fields on $X$. The feedback linearization problem for nonlinear ODECSs (i.e., when there exist a local change of coordinates in the state space and a feedback transformation such that the transformed system has a linear form in the new coordinates) has drawn the attention of researchers for decades (e.g. see survey papers \cite{respondek1985geometric,tall2005feedback} and books \cite{nijmeijer1990nonlinear,Isidori:1995:NCS:545735}). The solution of the feedback linearization problem of ODECSs was first given in Brockett's paper \cite{brockett1978feedback} and developed by Jakubczyk and Respondek \cite{Jakubczyk&Respondek1980}, Su \cite{SU198248}, Hunt et Su \cite{hunt1983global}. Compared to the ODECSs, fewer results on the linearization problems of DACSs can be found.  Xiaoping \cite{4793293} transformed a nonlinear DACS into a linear one by state space transformations, Kawaji \cite{kawaji1994feedback} gave sufficient conditions for the feedback linearization of a special class of DACSs, Wang and Chen  \cite{983833} considered a semi-explicit differential-algebraic equation (DAE) and linearized the differential part of the DAE. The linearization of semi-explicit DAEs under equivalence of different levels  is studies in \cite{chen2019internal}.

In the present paper, our purpose is to find when a given DACS of the form (\ref{Eq:DACS0}) is locally equivalent to a linear completely controllable one (see the definition of the complete controllability of linear DACSs in  \cite{berger2013controllability}). In particular, we will consider two kinds of equivalence relations, namely, the external feedback equivalence given in Definition \ref{Def:ex-fb-equi} and the internal feedback equivalence given in Definition \ref{Def:in-fb-equi}. Note that the words ``external'' and ``internal'', appearing throughout this paper, basically mean that we consider the DACS on an open neighborhood of the generalized state space $X$ and on the \emph{locally maximal controlled invariant submanifold} $M^*$ (see Definition \ref{Def:CIS}),  respectively. We have discussed in detail  the differences and relations of the two equivalence relations for linear DAEs   \cite{chen2021geometric}, and for semi-explicit DAEs  \cite{chen2019internal}.  We will use a notion  called the \emph{explicitation with driving variables} (see Definition \ref{Def:explicitation}, firstly proposed in \cite{chen2021from} for linear DACSs) to  connect  nonlinear DACSs with nonlinear ODECSs. Via the explicitation with driving variables, we can interpret the linearizability of a DACS under internal or external feedback equivalence as that of an explicitation system under system feedback equivalence (see Definition \ref{Def:sys-fb}).   


The paper is organized as follows: In Section \ref{Sec:2}, we define the external and the internal feedback equivalences and discuss their relations with solutions.  In Section \ref{Sec:3}, we use the notion of explicitation with driving variables to connect DACSs with ODECSs. Necessary and sufficient conditions for both the external and the internal feedback linearization problems of DACSs are given in Section \ref{Sec:4}.   We illustrate the results of Section~\ref{Sec:4}  by the two examples in Section \ref{Sec:5}.    The conclusions and  perspectives of this paper are given in Section~\ref{Sec:6} and a technical proof is given in Appendix. 

\section{External and internal feedback equivalence}\label{Sec:2} 
We use the following notations in the present paper: 
We denote by $T_x M\in \mathbb R^n$  the tangent space at $x\in M$ of a differentiable submanifold $M$ of $\mathbb{R}^n$.
We use $\mathcal C^k$ to denote the class of {$k$-times differentiable functions} and $GL(n, \mathbb R)$ to denote the group of nonsingular matrices of $\mathbb R^{n\times n}$.  For a smooth map $f:X\to \mathbb R$, we  denote its differential by ${\rm d} f=\sum^n_{i=1}\frac{\partial f}{\partial x_i}{\rm d}x_i=[\frac{\partial f}{\partial x_1},\dots,\frac{\partial f}{\partial x_n}]$. For a map $A:X\to \mathbb R^{m\times n}$, $\ker A(x)$, ${\rm Im\,} A(x)$ and ${\rm rank\,}A(x)$ are the kernel, the image and the rank of $A$ at $x$, respectively.  For a full row rank map $R:X\to \mathbb R^{r\times n}$, we denote by $R^{\dagger}:X\to \mathbb R^{n\times r}$ the right inverse of $R$, i.e., $RR^{\dagger}=I_r$.  For two column vectors  $v_1\in \mathbb R^m$ and $ v_2\in \mathbb R^n$, we  write $(v_1,v_2)=[v^T_1,v^T_2]^T\in \mathbb R^{m+n}$.
We assume  the reader is familiar with  basic notions of differential geometry such  as smooth  embedded submanifolds,  involutive distributions and refer the reader e.g. to the book  \cite{lee2001introduction} for the formal definitions of such notions. 
\begin{defn}[solutions and admissible set]
	For a DACS $\Xi^u_{l,n,m}=(E,F,G)$,  a curve $(x,u):I\rightarrow X\times\mathbb R^m$  defined on an open interval $I\subseteq \mathbb R$ with $x(\cdot)\in\mathcal{C}^{1}$  and $u(\cdot)\in\mathcal{C}^{0}$, is called a solution of $\Xi^u$  if for all $t\in I$, $E(x(t))\dot x(t)=F(x(t))+G(x(t))u(t)$. We call a point $x_a\in X$ \emph{admissible} if there exists at least one solution $(x(\cdot),u(\cdot))$ such that $x(t_a)=x_a$ for a certain $t_a\in I$. The set of all admissible points will be called the admissible set (or the consistency set) of $\Xi^u$ and denoted by $S_a$.
\end{defn} 
A smooth connected embedded submanifold $M$  is called   \emph{controlled invariant}     if   for any point $x_0\in M$, there exists  a solution $(x,u):I\rightarrow M\times \mathbb R^m$ such that $x(t_0)=x_0$ for a certain $t_0\in I$ and $x(t)\in M$, $\forall \, t\in I$. Fix an admissible  point $x_a\in X$, a smooth connected embedded submanifold $M$ containing $x_a$ is called \emph{locally controlled invariant} if there exists a neighborhood $U$ of $x_a$ such that $M\cap U$ is controlled invariant.
\begin{defn}[locally maximal controlled invariant submanifold]  \label{Def:CIS}
	A locally controlled invariant submanifold $M^*$, around an admissible point $x_a$, is called \emph{maximal} if there exists a neighborhood $U$ of $x_a$ such that for any other locally controlled invariant submanifold $M$, we have $M\cap U \subseteq M^*\cap U$.
\end{defn}
The locally maximal controlled invariant submanifold $M^*$   of a DACS  can be constructed via the following \emph{geometric reduction method}, which was frequently used  (see e.g., \cite{reich1991existence,rabier1994geometric,riaza2008differential,berger2016controlled,chen2021nfs}) for studying   existence  of solutions for DAEs and DACSs. 
\begin{defn}[geometric reduction method \cite{chen2021nfs,berger2016controlled}]\label{Def:gr}
	For a DACS $\Xi^u_{l,n,m}=(E,F,G)$, fix a point $x_p\in X$. Let $U_0$ be a connected subset of $X$ containing $x_p$. Step 0: Set $M_0=X$ and $M^c_0=U_0$. Step $k$ ($k>0$): Suppose that  a sequence of smooth connected embedded submanifolds $M^c_{k-1}\subsetneq \cdots\subsetneq M^c_0$ of $U_{k-1}$ for a certain $k-1$, have been constructed. Define recursively 
	\begin{align*}
		{M_{k}} := \left\{ {x \in M^c_{k-1}\,|\,F(x) \in E(x){T_x M^c_{k-1}}}+{\rm Im\,} G(x) \right\}.
	\end{align*} 
	As long as $x_p\in M_k$, let $M^c_k=M_k\cap U_k$ be a smooth embedded connected submanifold for some neighborhood $U_k\subseteq U_{k-1}$ of $x_p$.
\end{defn} 
\begin{pro}[\cite{chen2021nfs}]\label{Pro:M^*}
	In the above geometric reduction method, there always exists  a smallest $k^*$ such that either $k^*$ is the smallest integer for which $x_p\notin M_{k^*+1}$ or $k^*$ is the smallest integer such that $x_p\in M^c_{k^*+1}$ and $M^c_{k^*+1}\cap U_{k^*+1}=M^c_{k^*}\cap U_{k^*+1}$. In the latter case, denote $M^*=M^c_{k^*+1}$ and
	assume that there exists an open neighborhood $U^*\subseteq U_{k^*+1}$ of $x_p$ such that    $\dim E(x)T_xM^*=const.$ and  $E(x)T_xM^*+{\rm Im\,}G(x)=const.$ for all $x\in M^*\cap U^*$, then
	\begin{itemize} 
		\item [(i)]   $x_p$ is an admissible point, i.e., $x_p=x_a$ and $M^*$ is the locally maximal controlled invariant submanifold around $x_p$; 
		\item [(ii)]     $M^*$ coincides locally with the admissible set $S_a$, i.e., $M^*\cap U^*=S_a\cap U^*$.
	\end{itemize}
\end{pro}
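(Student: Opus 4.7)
The plan is to establish the dichotomy first, then construct a tangent vector field on $M^*$ that integrates to a genuine solution through $x_p$, and finally use this construction to prove both maximality and the coincidence with the admissible set.

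First, I would address existence of $k^*$. The sequence of submanifolds $M^c_0\supseteq M^c_1\supseteq \cdots$ is built recursively, and as long as strict inclusions hold, the dimensions $\dim M^c_k$ form a strictly decreasing sequence of nonnegative integers. So after finitely many steps exactly one of the following occurs: the construction terminates because $x_p\notin M_{k^*+1}$, or it stabilizes in the sense $M^c_{k^*+1}\cap U_{k^*+1}=M^c_{k^*}\cap U_{k^*+1}$. This is the stated dichotomy.

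Assume the stabilizing case and set $M^*=M^c_{k^*+1}$. The stabilization condition rewritten on $M^*\cap U^*$ says: for every $x\in M^*\cap U^*$, one has $F(x)\in E(x)T_xM^*+{\rm Im}\,G(x)$. Combined with the hypothesis that both $\dim E(x)T_xM^*$ and $\dim(E(x)T_xM^*+{\rm Im}\,G(x))$ are locally constant, a standard parameterized linear-algebra argument (choosing local smooth frames for $T_xM^*$ and complements for the image spaces) produces smooth maps $\xi:M^*\cap U^*\to TM^*$ and $u:M^*\cap U^*\to\mathbb R^m$ with $\xi(x)\in T_xM^*$ and $E(x)\xi(x)=F(x)+G(x)u(x)$. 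Then $\xi$ is a smooth vector field on $M^*\cap U^*$; its integral curve $x(\cdot)$ through $x_p$, together with $u(x(\cdot))$, is a solution of $\Xi^u$ lying in $M^*$. This simultaneously proves $x_p\in S_a$ and that $M^*$ is locally controlled invariant. For maximality, if $M$ is any other locally controlled invariant submanifold through $x_p$, then for each $x_0\in M$ a solution tangent to $M$ through $x_0$ gives $F(x_0)\in E(x_0)T_{x_0}M+{\rm Im}\,G(x_0)$; an induction on $k$ shows $M\cap U\subseteq M_k$ for all $k$, hence $M\cap U\subseteq M^*$ locally, proving~(i).

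For (ii), the ``$\subseteq$'' inclusion $S_a\cap U^*\subseteq M^*\cap U^*$ follows because any solution $(x(\cdot),u(\cdot))$ through a point $x_a\in U^*$ must, by the same inductive argument used for maximality, have its trace contained in every $M_k$ near $x_a$, hence in $M^*$. The reverse inclusion is immediate: any $x\in M^*\cap U^*$ lies on an integral curve of the vector field $\xi$ constructed above, so $x\in S_a$. The main obstacle is the smooth-selection step inside the proof of (i): the constant-rank assumptions are exactly what is needed to ensure that the pointwise solvability $F(x)\in E(x)T_xM^*+{\rm Im}\,G(x)$ can be upgraded to a smooth choice of $(\xi(x),u(x))$ tangent to $M^*$, and verifying that the resulting integral curve stays in the neighborhood where all reduction identities persist is the technical heart of the argument. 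This is presumably why the authors place the detailed proof in the appendix.
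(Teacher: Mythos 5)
This proposition is stated with a citation to \cite{chen2021nfs} and the present paper contains no proof of it (the appendix proves Lemma~\ref{cl:1}, not this statement), so there is no internal proof to compare against; judged on its own, your argument is the standard geometric-reduction proof and is essentially correct: the dichotomy by finiteness, the smooth selection of a tangent vector field $\xi(x)\in T_xM^*$ and feedback $u(x)$ solving $E(x)\xi(x)=F(x)+G(x)u(x)$ under the constant-rank hypotheses, the induction $M\cap U\subseteq M_k$ for any controlled invariant $M$ (and for the trace of any solution, giving $S_a\cap U^*\subseteq M^*$), and integral curves of $\xi$ for the reverse inclusion. One point to tighten: strict inclusion of connected embedded submanifolds does not by itself force the dimension to drop; rather, if $\dim M^c_{k+1}=\dim M^c_k$ then $M^c_{k+1}$ is open in $M^c_k$ and hence coincides with it near $x_p$, which is exactly the stabilization criterion, so failure to stabilize forces a strict dimension decrease and the process terminates in at most $n$ steps. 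Also note that your closing remark misattributes the appendix, which is devoted to the Brunovsk\'y normal form lemma rather than to this proposition.
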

By item (ii) of Proposition \ref{Pro:M^*}, the admissible set $S_a$ locally coincides with $M^*$ on the neighborhood $U^*$ of $x_p$. So   any point $x_0\in U^*\backslash M^*$ is not admissible and there exist   no 
solutions passing through $x_0$. Thus to study solutions of a DACS,   it is  convenient to consider only the restriction of the DACS to  its locally maximal controlled invariant  submanifold $M^*$. We have shown how to restrict a DACS   to the submanifold $M^*$ in Remark 3.4(iv) and Theorem 4.4(i) of \cite{chen2021nfs} with the help of normal forms, now we define formally the notion of restriction as follows. 

Consider a DACS $\Xi^u_{l,n,m}=(E,F,G)$ and fix an admissible point $x_a\in X$. 
Let $M^*$ be the $n^*$-dimensional maximal controlled invariant submanifold of $\Xi^u$ around $x_a$. Assume that there exists a neighborhood $U$ of $x_a$ such that  for all $x\in M^*\cap U$, 
\begin{enumerate} 
	\item[\textbf{(CR)}]     $\dim E(x)T_xM^*=const.=r^*$ and  $E(x)T_xM^*+{\rm Im\,}G(x)=const.=r^*+(m-m^*)$.
\end{enumerate}
Let $\psi: U\to \mathbb R^n$ be a local diffeomorphism and $z=\psi(x)=(z_1,z_2)$ be local coordinates on $U$ such that $M^*\cap U=\left\lbrace z_2=0 \right\rbrace$, thus $z_1$ are local coordinates on $M^*\cap U$. Then in the new $z$-coordinates, the DACS $\Xi^u$ becomes a system $\tilde \Xi^u_{l,n,m}=(\tilde E,\tilde F,\tilde G)$, given by
$$
\left[ \begin{matrix}
	\tilde E_1(z_1,z_2)&\tilde E_2(z_1,z_2)
\end{matrix}\right]\left[\begin{matrix}
	\dot z_1\\\dot z_2
\end{matrix} \right]=\tilde F(z_1,z_2)+ \tilde G(z_1,z_2)u,
$$
where $\tilde E_1:U\to \mathbb R^{l\times n^*}$, $\tilde E_2:U\to \mathbb R^{l\times (n-n^*)}$, 
$\tilde E\circ\psi=\left[ \begin{matrix}
	\tilde E_1\circ\psi&\tilde E_2\circ\psi
\end{matrix}\right]  =E\cdot\left( \frac{{\partial \psi }}{{\partial x}}\right) ^{-1}$,  $\tilde F\circ\psi= F$ and $\tilde G\circ\psi=G$. Set $z_2=0$ to have the following system (which is defined on $M^*$)
\begin{align}\label{Eq:DACS}
	\left[ \begin{matrix}
		\tilde E_1(z_1,0)&\tilde E_2(z_1,0)
	\end{matrix}\right]\left[\begin{matrix}
		\dot z_1\\0
	\end{matrix} \right]=\tilde F(z_1,0)+ \tilde G(z_1,0)u.
\end{align} By  \textbf{(CR)}, there exist a neighborhood $U_1\subseteq U$ of $x_a$ and  $Q:M^*\cap U_1\rightarrow GL(l,\mathbb R)$ such that   $\tilde E_1^1(z_1)$ and $\tilde G_2(z_1)$ below are of full row rank,
\begin{align*}
	Q(z_1)\left[ {\begin{matrix}
			\tilde E_1(z_1,0)&\tilde  F(z_1,0)&\tilde  G(z_1,0)
	\end{matrix}} \right]=\left[ \begin{matrix}
		\tilde	E^1_1(z_1)&\tilde  F_1(z_1)&\tilde G_1(z_1)\\0&\tilde  F_2(z_1)&\tilde G_2(z_1)\\0&\tilde  F_3(z_1)&0
	\end{matrix} \right],
\end{align*}
where $\tilde E^1_1$,  $\tilde G_2$ are smooth functions defined on $M^*\cap U_1$  with values in $\mathbb R^{r^*\times n^*}$ and $\mathbb R^{(m-m^*)\times m}$, respectively, and $\tilde F_1$, $\tilde F_2$, $\tilde F_3$  and $\tilde G_1$ are matrix-valued functions of appropriate sizes.  Since $\tilde G_2(z_1)$ is of full row rank,  we can always assume 
$\left[ {\begin{smallmatrix}
		\tilde G_1(z_1)\\ \tilde G_2(z_1)
\end{smallmatrix}} \right]=\left[ \begin{smallmatrix}
	\tilde G^1_1(z_1)&\tilde G_1^2(z_1)\\\tilde G^1_2(z_1)&\tilde G_2^2(z_1)
\end{smallmatrix} \right]$ with  $\tilde G_2^2: M^*\cap U_1\rightarrow GL(m-m^*,\mathbb R)$  (if not, we  permute the components of $u$ such that $\tilde G_2^2(z_1)$ is invertible), where $\tilde G^1_1$, $\tilde G^2_1$ and $\tilde G^1_2$ are of appropriate sizes.   Thus, via $Q$ and  the following feedback transformation (note that $a^u,b^u$ are defined on $M^*$ and $b^u(z_1)$ is invertible),
\begin{align*}
	\left[ {\begin{matrix}
			u_1\\u_2
	\end{matrix}} \right]=a^u(z_1)+b^u(z_1)u=\left[ {\begin{matrix}
			0\\
			\tilde F_2(z_1)
	\end{matrix}} \right]+\left[ {\begin{matrix}
			I_{m^*}&0\\
			\tilde G_2^1(z_1)&\tilde G_2^2(z_1)
	\end{matrix}} \right] u,
\end{align*}
the DACS (\ref{Eq:DACS}) is transformed  into
\begin{align}\label{Eq:inDACS}
	\left[ {\begin{matrix}
			\bar E^1_1(z_1)\\0\\0
	\end{matrix}} \right]\dot z_1=\left[ {\begin{matrix}
			\bar F_1(z_1)\\0\\ \bar F_3(z_1)
	\end{matrix}} \right]+\left[ {\begin{matrix}
			{\bar G^1_1(z_1)}& \bar G_1^2(z_1)\\0&I_{m-m^*}\\0&0
	\end{matrix}} \right]\left[ {\begin{matrix}
			u_1\\u_2
	\end{matrix}} \right],
\end{align} 
where $\bar E^1_1= \tilde E^1_1$, $\bar F_3=\tilde F_3$, $\bar F_1=\tilde F_1-\tilde G_1^2(\tilde G_2^2)^{-1} \tilde F_2$, $\bar G_1^1=\tilde G_1^1-\tilde G_1^2(\tilde G_2^2)^{-1}\tilde G^1_2$ and  $\bar G_1^2=\tilde G_1^2(\tilde G_2^2)^{-1}$. 
\begin{defn}[restriction]\label{Def:restrictionDACS}
	The local $M^*$-restriction of $\Xi^u$, denoted by $\Xi^u|_{M^*}$, is given by 
	\begin{align}\label{Eq:restr}
		\Xi^u|_{M^*}=\Xi^{u^*}:E^*(z^*)\dot z^*= F^*(z^*)+ G^*(z^*)u^*.
	\end{align}
	where $z^*=z_1$, $u^*=u_1$, $E^*=\bar E^1_1:M^*\to \mathbb R^{r^*\times n^*}$, $F^*=\bar F_1:M^*\to \mathbb R^{r^*}$ and $G^*=\bar G^1_1:M^*\to \mathbb R^{r^*\times m^*}$ come from    (\ref{Eq:inDACS}), and where  the map $E^*$ is of full row rank $r^*$.
\end{defn} 
\begin{rem}\label{Rem:restri}
	The restriction $\Xi^u|_{M^*}$ is a DACS   of the form (\ref{Eq:DACS0}) with associated dimensions $r^*,n^*,m^*$, i.e., $\Xi^u|_{M^*}= \Xi^{u^*}_{r^*,n^*,m^*}$.   
	It is important to know that  $\Xi^u$ and  $\Xi^u|_{M^*}$ has isomorphic solutions (see Theorem 4.4(i) of \cite{chen2021nfs}). More specifically, a curve $(x(\cdot),u(\cdot))$ is a solution of $\Xi^u$ passing through a point $x_0\in X$  if and only if $(z^*(\cdot),u^*(\cdot))$ is a solution of $\Xi^u|_{M^*}$  passing through $z^*_{0}\in M^*$, where $(z^*(\cdot),0)=\psi(x(\cdot))$, $(z^*_{0},0)=\psi(x_0)$ and $(u^*(\cdot),0)=a^u(z^*(\cdot))+b^u(z^*(\cdot))u(\cdot)$.
\end{rem}
Now we define the external and the internal feedback equivalences for  nonlinear DACSs and compare them by discussing their relations with   solutions.
\begin{defn}[external feedback equivalence]\label{Def:ex-fb-equi}
	Two DACSs $\Xi^u_{l,n,m}=(E,F,G)$ and $\tilde \Xi^{\tilde u}_{l,n,m}=(\tilde E,\tilde F,\tilde G)$ defined on $X$ and $\tilde X$, respectively, are called externally feedback equivalent, shortly ex-fb-equivalent,  if there exist a diffeomorphism $\psi:X\rightarrow \tilde X$ and smooth functions $Q:X\rightarrow GL(l,\mathbb{R})$, $\alpha^u: X\rightarrow \mathbb R^{m}$, $\beta^u: X\rightarrow GL(m,\mathbb R)$ such that
	\begin{align}\label{Eq:fb-equi}
		\begin{array}{c}
			\tilde E(\psi (x))=Q(x)E(x)\left( \frac{{\partial \psi (x)}}{{\partial x}}\right) ^{-1}, \quad \tilde F(\psi (x))=Q(x)\left( F(x)+G(x)\alpha^u(x)\right), \quad \tilde G(\psi(x))=Q(x)G(x)\beta^u(x).
		\end{array}
	\end{align}
	The ex-fb-equivalence of two DACSs $\Xi^u$ and $\tilde \Xi^{\tilde u}$ is denoted by $\Xi^u\mathop  \sim \limits^{ex-fb} \tilde \Xi^{\tilde u}$. If  $\psi: U\rightarrow  \tilde U$ is a local diffeomorphism between neighborhoods $U$ of a point $x_p$ and $\tilde U$ of a point $\tilde x_p=\psi(x_p)$, and $Q(x)$, $\alpha^u(x)$, $\beta^u(x)$ are defined on $U$, we will talk about local ex-fb-equivalence.
\end{defn} 
\begin{defn}[internal feedback equivalence]\label{Def:in-fb-equi}
	Consider two DACSs  $\Xi^u=(E,F,G)$ and $\tilde \Xi^{\tilde u}=(\tilde E,\tilde F,\tilde G)$ defined on $X$ and $\tilde X$, respectively. Fix two admissible points $x_a\in X$ and $\tilde x_a\in \tilde X$.  Assume that
	\begin{enumerate} 
		\item[(A1)]  $ M^*$ and $\tilde M^*$ are  locally maximal controlled invariant submanifolds of $\Xi^u$ around $x_a$ and of $\tilde \Xi^{\tilde u}$ around $\tilde x_a$, respectively.
		\item[(A2)] $M^*$ and $\tilde M^*$ satisfy the constant rank condition \textbf{(CR)} around $x_a$ and $\tilde x_a$, respectively. 
	\end{enumerate}
	Then, $\Xi^u$ and $\tilde \Xi^{\tilde u}$ are called internally feedback equivalent, shortly in-fb-equivalent,  if their restrictions $ \Xi^u|_{M^*}$ and $\tilde \Xi^{\tilde u}|_{\tilde M^*}$  are ex-fb-equivalent. We will denote the in-fb-equivalence of two DACSs by $ \Xi^u\mathop  \sim \limits^{in-fb} \tilde \Xi^{\tilde u}$. 
\end{defn}
\begin{rem}\label{Rem:in-fb-eq}  
	The dimensions of two in-fb-equivalent DACSs
	$\Xi^u$ and $\tilde \Xi^{\tilde u}$ are not necessarily the same. However, since  $\Xi^u|_{M^*}=\Xi^{u^*}_{l^*,n^*,m^*}$   and $\tilde \Xi^{\tilde u}|_{\tilde M^*}=\tilde \Xi^{\tilde u^*}_{\tilde l^*,\tilde n^*,\tilde m^*}$    are required to be external feedback equivalent, their dimensions have to be the same, i.e., $r^*=\tilde r^*$, $n^*=\tilde n^*$ and $m^*=\tilde m^*$.
\end{rem}
Both the ex-fb-equivalence and the in-fb-equivalence preserve  solutions of DACSs. Indeed, consider two ex-fb-equivalent DACSs $\Xi^u$ and $\tilde \Xi^{\tilde u}$,   the diffeomorphism $\tilde x =\psi(x)$ and the feedback transformation $u= \alpha^u(x)+\beta^u(x)\tilde u$ (defined on $X$)  establish a  one to one correspondence between  solutions $(x,u)$ of $\Xi^u$ and   solutions $(\tilde x,\tilde u)$ of $\tilde \Xi^{\tilde u}$, i.e., $\tilde x=\psi (x)$ and $u=\alpha^u(x)+\beta^u(x) \tilde u$. For two in-fb-equivalent DACSs $\Xi^u$ and $\tilde \Xi^{\tilde u}$, by $\Xi^u|_{M^*}\overset{ex-fb}{\sim}\tilde\Xi^{\tilde u}|_{\tilde M^*}$, there exist  a diffeomorphism $\tilde z^*=\psi^*(z^*)$ between $M^*$ and $\tilde M^*$, and a feedback transformation  $u^*=\alpha^{u^*}(z^*)+\beta^{u^*}(z^*) \tilde u^*$ defined on $M^*$ mapping solutions $(z^*,u^*)$ of $\Xi^u|_{M^*}$ into  solutions $(\tilde z^*,\tilde u^*)$ of $\tilde \Xi^{\tilde u}|_{\tilde M^*}$. Recall from Remark \ref{Rem:restri} that the DACSs $\Xi^u$ and $\tilde \Xi^{\tilde u}$ have isomorphic solutions with their restrictions $\Xi^u|_{M^*}$ and $\tilde \Xi^{\tilde u}$, respectively.
So  solutions $(x,u)$ of $\Xi^u$  are also in a   one-to-one correspondence with solutions $(\tilde x,\tilde u)$  of $\tilde \Xi^{\tilde u}$ if $\Xi^u\overset{in-fb}{\sim}\tilde \Xi^{\tilde u}$.

Conversely,   if  solutions of two DACSs $\Xi^u$ and $\tilde \Xi^{\tilde u}$ are in a one-to-one   correspondence via a diffeomorphism   and a feedback transformation, then the two DACSs are in-fb-equivalent, however, they are \emph{not} necessarily ex-fb-equivalence.  The reason  is that  solutions of DACSs exist  on   maximal controlled invariant submanifolds only, by assuming  two DACSs have corresponding solutions, we only have the information that the two restrictions $\Xi^u|_{M^*}$ and $\tilde \Xi^{\tilde u}|_{\tilde M^*}$ can be transformed into each other via a $Q$-transformation and a feedback  transformation defined on $M^*$, together with a diffeomorphism  between   $M^*$ and $\tilde M^*$, we do not know, however, if those transformations can be extended outside the submanifolds  $M^*$ and $\tilde M^*$. 
\begin{exa}
	Consider two DACSs $\Xi^u_{3,3,1}=(E,F,G)$ defined on $X=\mathbb R^3$ and $\tilde \Xi^{\tilde u}_{3,3,1}=(\tilde E,\tilde F,\tilde G)$ defined on $\tilde X=\mathbb R^3$, where
	$$
	\begin{aligned}
		E(x)=\left[ \begin{matrix}
			1&0&0\\
			0&0&0\\
			0&0&0
		\end{matrix}\right], \ \ F(x)=\left[ \begin{matrix}
			(x_1)^2\\
			e^{x_1}x_2\\
			x_3
		\end{matrix}\right], \ \ G(x)=\left[ \begin{matrix}
			e^{x_2}\\
			0\\
			0
		\end{matrix}\right], \ \  \tilde   E(\tilde x)=\left[ \begin{matrix}
			1&\tilde x_2&0\\
			0&0&0\\
			0&1&0
		\end{matrix}\right],  \ \ \tilde F(\tilde  x)=\left[ \begin{matrix}
			\tilde  x_2\\
			e^{\tilde x_1}\tilde x_2\\
			\tilde x_3
		\end{matrix}\right], \ \ \tilde G(\tilde  x)=\left[ \begin{matrix}
			1\\
			0\\
			0
		\end{matrix}\right].
	\end{aligned}
	$$
	It is seen that $M^*=\{(x_1,x_2,x_3)\in \mathbb R^3\,|\, x_2=x_3=0\}$ and $\tilde M^*=\{(\tilde x_1,\tilde x_2,\tilde x_3)\in \mathbb R^3\,|\, \tilde x_2=\tilde x_3=0\}$. The restrictions $\Xi^u|_{M^*}:\dot x_1=(x_1)^2+u$ and $\tilde \Xi^{\tilde u}|_{\tilde M^*}:\dot {\tilde x}_1=\tilde u$ are ex-fb-equivalent  via $Q(x_1)=1$, $\tilde x_1=\psi(x_1)=x_1$ and $\tilde u=(x_1)^2+u$. Thus we have $\Xi^u\overset{in-fb}{\sim}\tilde \Xi^{\tilde u}$. It is clear that solutions $((x_1,0,0),u)$ of $\Xi^u$ and solutions $((\tilde x_1,0,0),\tilde u)$ of $\tilde \Xi^{\tilde u}$ have a one-to-one correspondence. However, the two DACSs are \emph{not} ex-fb-equivalent since $\rk E(x)\neq \rk \tilde E(\tilde x)$ (the matrix-valued functions  $E(x)$ and $\tilde E(\tilde x)$ of two ex-fb-equivalent DACSs should have the same rank).
\end{exa}
Both the external and the internal feedback equivalences play an important role for DACSs. The internal feedback equivalence is convenient when we are only interested in   solutions  passing  through an admissible point and evolving  on $M^*$. The ex-fb-equivalence is useful when the initial point $x_0\notin M^*$, i.e., $x_0$ is not admissible, then there are no  solutions passing through $x_0$ but there may still exist a jump from the inadmissible point $x_0$ to an  admissible one on $M^*$,  see our recent publication \cite{chen2021ADHS}, where we use  external equivalence   to study  jump solutions of nonlinear DAEs. 
\section{Explicitation of nonlinear differential- algebraic control systems}\label{Sec:3} 
We have proposed the notion of explicitation (with driving variables) for linear DACS in \cite{chen2021from} (or see Chapter 3 of \cite{chen2019geometric}), we now extend this notion to nonlinear DACSs.
\begin{defn}[explicitation with driving variables]\label{Def:explicitation}
	Given a DACS $\Xi^u_{l,n,m}=(E,F,G)$, fix a point $x_p\in X$. Assume that   $\rk E(x)=const.=r$  around $x_p$. Then locally there exists $Q:X\to GL(l,\mathbb R)$ such that $E_1$ of  $Q(x)E(x)=\left[ {\begin{smallmatrix}
			{E_1(x)}\\
			{0}
	\end{smallmatrix}} \right]$ is of full row rank $r$, denote
	$$
	Q(x)F(x)=\left[ {\begin{matrix}
			{F_1(x)}\\
			{F_2(x)}
	\end{matrix}} \right], \ \ Q(x)G(x)=\left[ {\begin{matrix}
			{G_1(x)}\\
			{G_2(x)}
	\end{matrix}}\right].
	$$
	Define locally the maps  $f: X\to \mathbb R^{n}$, $g^u:X\to \mathbb R^{n\times m}$, $g^v:X\to \mathbb R^{n\times s}$, $h:X\to \mathbb R^{p}$, $l^u:X\to \mathbb R^{p\times m}$, where $s=n-r$ and $p=l-r$,  such that
	$$
	\begin{array}{c}
		f(x)=E_1^{\dagger }(x)F_1(x), \ \ \ g^u(x)=E_1^{\dagger }(x)G_1(x), \ \ \ {\rm Im\,} g^v(x)=\ker E_1(x),\ \ \  h(x)=F_2(x), \ \ \ l^u(x)=G_2(x),
	\end{array}	
	$$
	where $E^{\dagger}_1$ is a right inverse of $E_1$. By a $(Q,v)$-explicitation, we will call any ODECS
	\begin{align}\label{Eq:controlsys1}
		\Sigma^{uv}:\left\lbrace \begin{array}{c@{\ }l}
			\dot x &= f(x) +  g^u(x)u+g^v(x)v,\\
			y  &= h(x)+l^u(x)u,
		\end{array} \right.
	\end{align}
	where $v\in \mathbb R^{s\times n}$ is called \emph{the vector of driving variables}. System (\ref{Eq:controlsys1}) is denoted by $\Sigma^{uv}_{n,m,s,p}=(f,g^u,g^v,h,l^u)$ or, simply, $\Sigma^{uv}$. 
\end{defn}
Clearly, in the above definition, the choices of the invertible map $Q$, the right inverse $E^{\dagger}_1$ and the map $g^v$ satisfying $\im g^v=\ker E_1=\ker E$,  are not unique. The following proposition shows that a $(Q,v)$-explicitation of a given DACS $\Xi^u$  is  an ODECS defined up to a feedback transformation, an output multiplication and a generalized output injection, i.e., a class of control systems. Throughout the class of all $(Q,v)$-explicitations of $\Xi^u$ will be called the explicitation class. For a particular ODECS $\Sigma^{uv}$ belonging to the explicitation class $\textbf{Expl}(\Xi^u)$ of $\Xi^u$, we will write  $\Sigma^{uv}\in\mathbf{Expl}(\Xi^u)$.
\begin{pro}\label{Pro:explinonDACS}
	Assume that an ODECS  $\Sigma^{uv}_{n,m,s,p}=(f,g^u,g^v,h,l^u)$ is a $(Q,v)$-explicitation of a DACS $\Xi^u=(E,F,G)$ corresponding to the choice of invertible matrix $Q(x)$, right inverse $E_1^{\dagger}(x)$ and matrix $g^v(x)$.  We have that an ODECS  $\tilde \Sigma^{u,\tilde v}_{n,m,p}=(\tilde f,\tilde g^{u},\tilde g^{\tilde v},\tilde h,\tilde l^{u})$ is a $(\tilde Q,\tilde v)$ -explicitation of  $\Xi^u$ corresponding to the choice of invertible matrix $\tilde Q(x)$, right inverse $\tilde E_1^{\dagger}(x)$ and matrix $\tilde g^{\tilde v}(x)$  if and only if $\Sigma^{uv}$ and $\tilde \Sigma^{u,\tilde v}$ are equivalent via a $v$-feedback transformation of the form $v=\alpha^v(x)+\lambda(x)u+\beta^v(x)\tilde v$, a generalized output injection $\gamma(x)y=\gamma(x)(h(x)+l^u(x)u)$ and an output multiplication $\tilde y=\eta(x)y$, 
	which map 
	$$
	\begin{array}{c}
		f\mapsto \tilde f=f+\gamma h+g^v\alpha^v, \ \ \  g^u\mapsto \tilde g^{u}=g^u+ \gamma l^u+g^v\lambda, \ \ \ g^v\mapsto  \tilde g^{\tilde v}=g^v\beta^v, \ \ \  h\mapsto  \tilde h=\eta h,  \ \  \ l^u\mapsto  \tilde l^u=\eta l^u.
	\end{array}
	$$
	where $\alpha^v(x)$, $\beta^v(x)$, $\gamma(x)$, $\lambda(x)$, $\eta(x)$ are smooth matrix-valued functions, and $\beta^v(x)$ and $\eta(x)$ are invertible. 
\end{pro}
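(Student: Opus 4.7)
The plan is to analyze how the three non-unique ingredients in Definition~\ref{Def:explicitation}---the invertible matrix $Q(x)$, the right inverse $E_1^{\dagger}(x)$ of $E_1$, and the kernel basis $g^v(x)$ with $\im g^v=\ker E_1$---propagate through the construction, and to match the three corresponding degrees of freedom with the three operations (output multiplication together with generalized output injection, $v$-feedback, and $v$-basis change) listed in the statement.

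For the direction ``$\Longrightarrow$'', suppose both $\Sigma^{uv}$ and $\tilde\Sigma^{u,\tilde v}$ arise from $\Xi^u$. First I set $P(x):=\tilde Q(x)Q(x)^{-1}$ and partition it conformally with $QE=\bigl[\begin{smallmatrix} E_1\\0\end{smallmatrix}\bigr]$. The identity $P\bigl[\begin{smallmatrix} E_1\\0\end{smallmatrix}\bigr]=\bigl[\begin{smallmatrix} \tilde E_1\\0\end{smallmatrix}\bigr]$ combined with the full row rank of $E_1,\tilde E_1$ forces $P=\bigl[\begin{smallmatrix} P_{11}&P_{12}\\ 0&P_{22}\end{smallmatrix}\bigr]$ with $P_{11},P_{22}$ invertible, yielding
$$
\tilde F_1=P_{11}F_1+P_{12}F_2,\ \ \tilde F_2=P_{22}F_2,\ \ \tilde G_1=P_{11}G_1+P_{12}G_2,\ \ \tilde G_2=P_{22}G_2.
$$
Since any right inverse of $\tilde E_1=P_{11}E_1$ differs from $E_1^{\dagger}P_{11}^{-1}$ by a matrix with columns in $\ker E_1=\im g^v$, I may write $\tilde E_1^{\dagger}=E_1^{\dagger}P_{11}^{-1}+g^v N$ for a smooth $N(x)$. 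Direct expansion of $\tilde f=\tilde E_1^{\dagger}\tilde F_1$ and $\tilde g^u=\tilde E_1^{\dagger}\tilde G_1$ produces
$$
\tilde f=f+E_1^{\dagger}P_{11}^{-1}P_{12}\,h+g^v N\tilde F_1,\ \ \tilde g^u=g^u+E_1^{\dagger}P_{11}^{-1}P_{12}\,l^u+g^v N\tilde G_1,
$$
from which one reads off $\gamma=E_1^{\dagger}P_{11}^{-1}P_{12}$, $\alpha^v=N\tilde F_1$, $\lambda=N\tilde G_1$, and $\eta=P_{22}$; finally $\tilde g^{\tilde v}=g^v\beta^v$ for an invertible $\beta^v$, since $\ker\tilde E_1=\ker E_1$.

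For the direction ``$\Longleftarrow$'', the plan is to reverse this construction. Using the direct-sum decomposition $\mathbb R^n=\im E_1^{\dagger}\oplus\ker E_1$ (which follows from $E_1E_1^{\dagger}=I_r$), I split $\gamma=E_1^{\dagger}A+g^v B$ uniquely and set
$$
\tilde Q:=\begin{bmatrix} I_r & A\\ 0 & \eta\end{bmatrix}Q,\ \ \tilde E_1^{\dagger}:=E_1^{\dagger}+g^v N,\ \ \tilde g^{\tilde v}:=g^v\beta^v,
$$
with $N(x)$ chosen to fulfill $N\tilde F_1=\alpha^v+Bh$ and $N\tilde G_1=\lambda+Bl^u$; the residual ambiguity of $\gamma\bmod\im g^v$ gets absorbed into $\alpha^v$ and $\lambda$ via the same formulas exhibited in the ``$\Longrightarrow$'' part, which guarantees that such $N$ can be produced. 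Substituting $(\tilde Q,\tilde E_1^{\dagger},\tilde g^{\tilde v})$ back into Definition~\ref{Def:explicitation} then reproduces $\tilde\Sigma^{u,\tilde v}$ exactly, certifying it as a $(\tilde Q,\tilde v)$-explicitation of $\Xi^u$.

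The main technical step is the parametrization ``every right inverse of $\tilde E_1=P_{11}E_1$ is of the form $E_1^{\dagger}P_{11}^{-1}+g^v N$''; this rests on $\ker E_1=\im g^v$ together with the constant-rank hypothesis $\rk E(x)\equiv r$ near $x_p$, which also lets $E_1^{\dagger}$ and $g^v$ be chosen smoothly on a neighborhood. Once this parametrization is in place, the rest of the argument is block-matrix bookkeeping and identification of coefficients on both sides of the transformation formulas.
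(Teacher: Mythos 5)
Your ``only if'' direction is correct and is essentially the argument the authors intend (the paper omits the proof, deferring to the linear analogue, Proposition~2.3 of \cite{chen2021from}): the block-triangular factorization $P=\tilde QQ^{-1}$, the parametrization $\tilde E_1^{\dagger}=E_1^{\dagger}P_{11}^{-1}+g^vN$ of all right inverses of $\tilde E_1=P_{11}E_1$, and the identifications $\gamma=E_1^{\dagger}P_{11}^{-1}P_{12}$, $\eta=P_{22}$, $\alpha^v=N\tilde F_1$, $\lambda=N\tilde G_1$, $\tilde g^{\tilde v}=g^v\beta^v$ are all sound. This is also the only direction the paper actually invokes (it is what yields that any two explicitations are sys-fb-equivalent).

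The ``if'' direction, however, has a genuine gap exactly at the step you wave through. With $\gamma=E_1^{\dagger}A+g^vB$ as in your decomposition, you must produce $N$ satisfying simultaneously $N\tilde F_1=\alpha^v+Bh$ and $N\tilde G_1=\lambda+Bl^u$, i.e.\ $N\,[\tilde F_1\ \ \tilde G_1]=[\alpha^v+Bh\ \ \lambda+Bl^u]$; such an $N$ exists only when the rows of the right-hand side lie in the row space of $[\tilde F_1\ \ \tilde G_1]$, which is \emph{not} guaranteed for arbitrary $\alpha^v,\lambda,\gamma$. Concretely, take $E={\rm diag}(1,0)$, $F\equiv0$, $G\equiv0$: every explicitation then has $f=E_1^{\dagger}F_1\equiv 0$, yet the transformed drift $\tilde f=f+\gamma h+g^v\alpha^v=g^v\alpha^v$ is nonzero whenever $\alpha^v\neq0$, so it cannot be realized as $\tilde E_1^{\dagger}\tilde F_1$ for any admissible choice of $(\tilde Q,\tilde E_1^{\dagger},\tilde g^{\tilde v})$. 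Your justification --- that the forward-direction formulas ``guarantee that such $N$ can be produced'' --- is circular: in the forward direction $N$ is given and $\alpha^v=N\tilde F_1$, $\lambda=N\tilde G_1$ are \emph{defined} from it, whereas here $\alpha^v,\lambda$ are arbitrary data and $N$ is the unknown of an overdetermined linear system. To close the argument one must either add a solvability (nondegeneracy) hypothesis on $[F_1\ \ G_1]$, or weaken the converse to: $\tilde\Sigma^{u\tilde v}$ is an explicitation if and only if it is obtained from $\Sigma^{uv}$ by a transformation of the stated form in which $\alpha^v$, $\lambda$ and the $\ker E_1$-component of $\gamma$ are of the specific shape $N\tilde F_1$, $N\tilde G_1$ exhibited in the first part.
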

We omit the proof of Proposition \ref{Pro:explinonDACS} since it follows the same line as that of Proposition 2.3 in \cite{chen2021from}. Now we will define an equivalence relation for two ODECSs of the form (\ref{Eq:controlsys1}). 
\begin{defn}[system feedback equivalence]\label{Def:sys-fb}
	Two ODECSs $\Sigma^{uv}_{n,m,s,p}=(f,g^u,g^v,h,l^u)$ and $\tilde \Sigma^{\tilde u \tilde v}_{n,m,s,p}=(\tilde f,\tilde g^{\tilde u},\tilde g^{\tilde v},\tilde h,\tilde l^{\tilde u})$ defined on $X$ and $\tilde X$, respectively, are called  system feedback equivalent, or shortly sys-fb-equivalent, if there exist a  diffeomorphism $\psi:  X\rightarrow \tilde X$, smooth functions $\alpha^u(x)$, $\alpha^v(x)$, $\lambda(x)$ and $\gamma(x)$ with values in $\mathbb R^m$, $\mathbb R^s$, $\mathbb R^{s\times m}$ and $\mathbb R^{n\times p}$, respectively, and invertible smooth matrix-valued functions $\beta^u(x)$, $\beta^v(x)$ and $\eta(x)$ with values in $GL(m,\mathbb R)$, $GL(s,\mathbb R)$ and $GL(p,\mathbb R)$, respectively, such that
	\begin{align}\label{Eq:fb-sys-equi}
		\begin{aligned}
			 \left[ {\begin{matrix}
					{\tilde f\circ \psi}&{\tilde g^{\tilde u}\circ \psi}&{\tilde g^{\tilde v}\circ \psi}\\
					{\tilde h\circ \psi}&{\tilde l^{\tilde u}\circ \psi}&0
			\end{matrix}} \right]=  \left[ {\begin{matrix}
					{\frac{{\partial \psi }}{{\partial x}}}&{\frac{{\partial \psi }}{{\partial x}}\gamma  }\\
					0&{\eta }
			\end{matrix}} \right]\left[ {\begin{matrix}
					{f}&{g^u}&{g^v}\\
					{h}&{l^u}&0
			\end{matrix}} \right]\left[ {\begin{matrix}
					1&0&0\\
					{\alpha^u}&{\beta^u}&0\\
					{\alpha^v  + \lambda \alpha^u}&{\lambda \beta^u}&{\beta^v }
			\end{matrix}} \right].
		\end{aligned}
	\end{align}
	The sys-fb-equivalence of two control systems will be denoted by $\Sigma^{uv}\mathop  \sim \limits^{sys-fb}\tilde \Sigma^{\tilde u \tilde v}$. If $\psi:U \rightarrow\tilde U$ is a local diffeomorphism between neighborhoods $U$ of a point $x_p$ and $\tilde U$ of a point $\tilde x_p=\psi(x_p)$, and $\alpha^u$, $\alpha^v$, $\lambda$, $\gamma$, $\beta^u$, $\beta^v$, $\eta$ are defined  on $U$, we will speak about local sys-fb-equivalence.
\end{defn}
The two ODECSs $\Sigma^{uv}$ and $\tilde \Sigma^{u\tilde v}$ of Proposition \ref{Pro:explinonDACS} are, by definition, system feedback equivalent with $\psi$ being identity, $\alpha^u=0$ and $\beta^u=I_m$. The following observation is crucial  and will play an important role for studying the feedback linearization problems of DACSs in Section~\ref{Sec:4}, which points out that the  feedback transformations  of explicitation systems  of DACSs have a   \emph{triangular form} which are different from  those of classical (ODE) control systems (see e.g., \cite{nijmeijer1990nonlinear,Isidori:1995:NCS:545735}).
\begin{rem}\label{rem:fb-sys-eq}
	Observe that, in  (\ref{Eq:fb-sys-equi}), there are two kinds of feedback transformations. Namely, $$u=\alpha^u(x)+\beta^u(x)\tilde u  \text{ \ \ and \ \ } v=\alpha^v(x)+\lambda(x)u+\beta^v(x)\tilde v,$$ which can be written together as a feedback transformation of $(u,v)$ with a (lower) \emph{triangular form}:
	\begin{align}\label{Eq:trianfb}
		\left[ {\begin{matrix}
				{u}\\
				{v}
		\end{matrix}} \right]=\left[ {\begin{matrix}
				{\alpha^u}(x)\\
				{\alpha^v}(x)
		\end{matrix}} \right]+\left[ {\begin{matrix}
				{\beta^u}(x)&0\\
				{\lambda}(x)&\beta^v(x)
		\end{matrix}} \right]\left[ {\begin{matrix}
				{\tilde u}\\
				{\tilde v}
		\end{matrix}} \right].
	\end{align}
	It implies that there are two kinds of inputs in the ODECSs of the form (\ref{Eq:controlsys1}), one input (the driving variable $v$) is more ``powerful'' than the other input (the original control variable $u$), since when transforming $v$, we can use both $u$ and $x$, but when transforming $u$, we are \emph{not} allowed to use $v$. Another difference between $u$ and $v$ is that the input $u$ is injected into the output $y$ via $l^uu$, but the driving variable $v$ is not directly injected into the output $y$. 

\end{rem}
The following theorem connects  ex-fb-equivalence of two DACSs with   sys-fb-equivalence of two ODECSs (explicitations).
\begin{thm} \label{Thm:mainnonDACS}
	Consider two DACSs $\Xi^u_{l,n,m}=(E,F,G)$ and $\tilde \Xi^{\tilde u}_{l,n,m}=(\tilde E,\tilde F,\tilde G)$ defined on $X$ and $\tilde X$, respectively.  Assume that ${\rm rank\,} {E(x)}=const.=r$ in a neighborhood $U$ of a point $x_p\in X$ and ${\rm rank\,}{\tilde E(\tilde x)}=r$ in a neighborhood $\tilde U$ of a point $\tilde x_p\in \tilde X$. Then, given any ODECSs $\Sigma^{uv}_{n,m,s,p}=(f,g^u,g^v,h,l^u)\in \mathbf{Expl}(\Xi^u)$ and $\tilde \Sigma^{\tilde u \tilde v}_{n,m,s,p}=(\tilde f,\tilde g^{\tilde u},\tilde g^{\tilde v},\tilde h,\tilde l^{\tilde u}) \in \mathbf{Expl}(\tilde \Xi^{\tilde u})$, we have that locally $\Xi^u\mathop  \sim \limits^{ex-fb}\tilde \Xi^{\tilde u}$ if and only if $\Sigma^{uv}\mathop  \sim \limits^{sys-fb}\tilde\Sigma^{\tilde u\tilde v}$.
\end{thm}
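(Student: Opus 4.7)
The plan is to prove the two implications by tracking how the defining data of an explicitation transforms under each of the two equivalences, while using Proposition \ref{Pro:explinonDACS} at both ends to reduce from arbitrary explicitations to one convenient choice on each side. Since any two $(Q,v)$-explicitations of the same DACS differ by a $v$-feedback, an output injection and an output multiplication, and since by Definition \ref{Def:sys-fb} each of these is itself a sys-fb-equivalence with $\psi$ identity, $\alpha^u=0$, $\beta^u=I_m$, it suffices to prove the equivalence for one specific pair $(\Sigma^{uv},\tilde\Sigma^{\tilde u\tilde v})$ and then compose.

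For the forward direction, assume $\Xi^u$ and $\tilde\Xi^{\tilde u}$ are locally ex-fb-equivalent with data $(\psi, Q_0, \alpha^u, \beta^u)$ from (\ref{Eq:fb-equi}), and fix any explicitation $\Sigma^{uv} = (f, g^u, g^v, h, l^u)$ of $\Xi^u$ built from some $Q$, $E_1^{\dagger}$, $g^v$. I would push $\Sigma^{uv}$ forward along $\psi$ and verify directly from (\ref{Eq:fb-equi}) that the resulting ODECS on $\tilde X$ is a $(Q\,Q_0^{-1},\,v)$-explicitation of $\tilde\Xi^{\tilde u}$, with an explicitly computable right inverse and kernel generator. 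The ex-fb feedback $(\alpha^u, \beta^u)$ then populates the $(u,v)$-block of (\ref{Eq:fb-sys-equi}) with $\alpha^v = 0$ and $\lambda = 0$, yielding a sys-fb-equivalence between $\Sigma^{uv}$ and this pushed-forward explicitation; a final application of Proposition \ref{Pro:explinonDACS} links it to the given $\tilde\Sigma^{\tilde u\tilde v}$.

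For the converse, given the sys-fb data $(\psi, \alpha^u, \alpha^v, \beta^u, \beta^v, \gamma, \eta, \lambda)$ of (\ref{Eq:fb-sys-equi}), I would take $\psi$, $\alpha^u$, $\beta^u$ as the candidate ex-fb data on the DACS side and reconstruct the required $Q_0(x) \in GL(l,\mathbb R)$ from the remaining blocks: its upper $r$ rows come from combining $\tilde E_1$, the Jacobian of $\psi$, and the pushed-forward $E_1^{\dagger}$, while its lower $l-r$ rows come from the output-side data $(\gamma, \eta)$. A column-block computation then verifies the three identities of (\ref{Eq:fb-equi}), the key checks being that $g^v$ spans $\ker E_1$ and that $h = F_2$, $l^u = G_2$ reproduce the bottom rows of $Q F$ and $Q G$.

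The main obstacle will be showing that the reconstructed $Q_0$ is invertible and that $Q_0 E$ has the correct lower zero block. This is precisely where the triangular shape of (\ref{Eq:trianfb}) emphasized in Remark \ref{rem:fb-sys-eq} becomes indispensable: the absence of $v$ in the transformation law of $u$ is exactly what keeps the $r$-row/$(l-r)$-row splitting of $Q E$ stable under the equivalence, and the invertibility of $\beta^u$, $\beta^v$ and $\eta$ together forces nonsingularity of the assembled $Q_0$. Once this is settled, Proposition \ref{Pro:explinonDACS} once more absorbs any residual dependence on the choices of $Q$, $E_1^{\dagger}$ and $g^v$ made when forming the explicitations.
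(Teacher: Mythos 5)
Your proposal is correct and follows essentially the same route as the paper: reduce to convenient representatives of each explicitation class via Proposition \ref{Pro:explinonDACS}, then in one direction extract the sys-fb data from the block decomposition $Q=\left[\begin{smallmatrix}Q_1&Q_2\\0&Q_4\end{smallmatrix}\right]$ (yielding $\gamma=E_1^{\dagger}Q_1^{-1}Q_2$, $\eta=Q_4$), and in the other reassemble $Q=\left[\begin{smallmatrix}Q_1&Q_1E_1\gamma\\0&\eta\end{smallmatrix}\right]$ with $Q_1$ obtained from $\ker\tilde E_1\circ\psi=\frac{\partial\psi}{\partial x}\ker E_1$, which is forced by the $\tilde g^{\tilde v}$ relation. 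Your packaging of the ex-fb$\Rightarrow$sys-fb direction through an intermediate pushed-forward explicitation is a mild reorganization of the paper's direct computation (where the right-inverse ambiguity is absorbed into the $\alpha^v$, $\lambda$ terms annihilated by $E_1g^v$), not a genuinely different argument.
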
 
\begin{proof}
	By the assumptions that  ${\rm rank\,} E(x)$ and ${\rm rank\,}\tilde E(x)$ are constant and equal to $r$ around $x_p$ and $\tilde x_p$, respectively,  there exist  invertible matrix-valued functions $Q:U\rightarrow GL(l,\mathbb R)$ and $\tilde Q:\tilde U\rightarrow GL(l,\mathbb R)$, defined on neighborhoods $U$ of $x_p$ and $\tilde U$ of $\tilde x_p$, respectively, such that 
	$E'(x)=Q(x)E(x)=\left[ {\begin{smallmatrix}
			E_1(x)\\
			0
	\end{smallmatrix}} \right]$ and   $\tilde E'(\tilde x)=\tilde Q(\tilde x)\tilde E(\tilde x)=\left[ {\begin{smallmatrix}
			\tilde E_1(\tilde x)\\
			0
	\end{smallmatrix}} \right]$, where $E_1:U\rightarrow R^{r\times n}$ and $\tilde E_1: \tilde U\rightarrow R^{r\times n}$  are of full row rank. We have $\Xi^u\mathop  \sim \limits^{ex-fb}\Xi^{u'}=(E',F',G')$ and $\tilde \Xi^{\tilde u}\mathop  \sim \limits^{ex-fb}\tilde \Xi^{\tilde u'}=(\tilde E',\tilde F',\tilde G')$ via $Q(x)$ and $\tilde Q(\tilde x)$, respectively, where $$
	\begin{array}{c}
		F'(x)=QF(x)=\left[ {\begin{matrix}
				F_1(x)\\
				F_2(x)
		\end{matrix}} \right],\quad G'(x)=QG(x)=\left[ {\begin{matrix}
				G_1(x)\\
				G_2(x)
		\end{matrix}} \right],\quad
		\tilde F'(\tilde x)=\tilde Q\tilde F(\tilde x)=\left[ {\begin{matrix}
				\tilde F_1(\tilde x)\\
				\tilde F_2(\tilde x)
		\end{matrix}} \right],\quad \tilde G'(\tilde x)=\tilde Q\tilde G(\tilde x)=\left[ \begin{matrix}
			\tilde G_1(\tilde x)\\
			\tilde G_2(\tilde x) \end{matrix} \right].
	\end{array}
	$$  In this proof, without loss of generality, we will assume that  $\Xi^u=\Xi^{u'}$ and $\tilde \Xi^{\tilde u}=\tilde \Xi^{\tilde u'}$, since  $\Xi^u\mathop  \sim \limits^{ex-fb}\tilde \Xi^{\tilde u}$ if and only if $ \Xi^{u'}\mathop  \sim \limits^{ex-fb}\tilde \Xi^{\tilde u'}$. 
	
	Moreover, choose   maps $f$, $g^u$, $g^v$, $h$, $l^u$ and $\tilde f$, $\tilde g^{\tilde u}$, $\tilde g^{\tilde v}$, $\tilde h$, $\tilde l^{\tilde u}$ such that
	\begin{align}\label{Eq:sys matrix}
		\begin{array}{c}
			f(x)=E^{\dagger}_1(x)F_1(x), \quad g^u(x)=E^{\dagger}_1(x)G_1(x), \quad 	{\rm Im\,}g^v(x)=\ker E_1(x), \quad h(x)=F_2(x), \quad	l^u(x)=G_2(x), \\
\tilde f(\tilde x)=\tilde E^{\dagger}_1(\tilde x)\tilde F_1(\tilde x),\quad  \tilde g^{\tilde u}(\tilde x)=\tilde E^{\dagger}_1(\tilde x)\tilde G_1(\tilde x),\quad {\rm Im\,}\tilde g^{\tilde v}(\tilde x)=\ker \tilde E_1(\tilde x), \quad \tilde h(\tilde x)=\tilde F_2(\tilde x),\quad \tilde l^{\tilde u}(\tilde x)=\tilde G_2(\tilde x),  
		\end{array}
	\end{align}
	where $E^{\dagger}_1(x)$ and $\tilde E^{\dagger}_1(\tilde x)$ are right inverses of $E_1(x)$ and $\tilde E_1(\tilde x)$, respectively. Then by Definition \ref{Def:explicitation},  
	$$
	\begin{aligned}
		\Sigma^{uv}=(f,g^u,g^v,h,l^u)\in \mathbf{Expl}(\Xi^u),\ \ \ \ \ \  \tilde \Sigma^{\tilde u \tilde v}=(\tilde f,\tilde g^{\tilde u},\tilde g^{\tilde v},\tilde h,\tilde l^{\tilde u})\in \mathbf{Expl}(\tilde \Xi^{\tilde u}).
	\end{aligned}
	$$ 
	It is seen from Proposition \ref{Pro:explinonDACS} that any control system in $\mathbf{Expl}(\Xi^u)$ is sys-fb-equivalent to $\Sigma^{uv}$ and that any control system in  $\mathbf {Expl}(\tilde \Xi^{\tilde u})$ is sys-fb-equivalent to $\tilde \Sigma^{\tilde u \tilde v}$. Without loss of generality, in the remaining part of the proof, we use $\Sigma^{uv}$ and $\tilde \Sigma^{\tilde u \tilde v}$ with system matrices given by (\ref{Eq:sys matrix}) to represent   two ODECSs in $\mathbf{Expl}(\Xi^u)$ and $\mathbf {Expl}(\tilde \Xi^{\tilde u})$, respectively. Throughout the proof below, we may drop the argument $x$ for the
	functions $E(x)$, $F(x)$, $G(x)$, ..., for ease of notation.
	
	\emph{If.} Suppose that locally $\Sigma^{uv}\mathop  \sim \limits^{sys-fb}\tilde \Sigma^{\tilde u \tilde v}$. Then there exist a  local diffeomorphism $\tilde x=\psi(x)$ and matrix-valued functions $\alpha^u$, $\alpha^v$, $\lambda$, $\gamma$, $\beta^u$, $\beta^v$, $\eta$  defined on a neighborhood $U$ of $x_p$ such that the system matrices satisfy relations (\ref{Eq:fb-sys-equi}) of Definition \ref{Def:sys-fb}.
	
	First, consider $\tilde g^{\tilde v}\circ\psi ={ \frac{{\partial \psi }}{{\partial x}}}g^v\beta^v $. By ${\rm Im\,}g^v=\ker E_1$, ${\rm Im\,}\tilde g^{\tilde v}=\ker \tilde E_1$, we have $\ker \tilde E_1\circ\psi={ \frac{{\partial \psi }}{{\partial x}} }\ker E_1$. Thus there exists $Q_1:U\to GL(r,\mathbb R)$ such that 
	\begin{align}\label{Eq:EtildeE}
		\tilde E_1\circ\psi=Q_1E_1\left(\frac{{\partial \psi }}{{\partial x}}\right)^{-1}.
	\end{align} 
	Then,  by (\ref{Eq:fb-sys-equi}), the following relation holds:
	\begin{align*}
		\left[ {\begin{matrix}
				{\tilde f\circ \psi}&{\tilde g^{\tilde u}\circ \psi}\\
				{\tilde h\circ \psi}&{\tilde l^{\tilde u}\circ \psi}
		\end{matrix}} \right] = \left[ {\begin{matrix}
				{\frac{{\partial \psi }}{{\partial x}}}&{\frac{{\partial \psi }}{{\partial x}}\gamma }\\
				0&{\eta }
		\end{matrix}} \right]\left[ {\begin{matrix}
				{f}&{g^u}&{g^v}\\
				{h}&{l^u}&0
		\end{matrix}} \right]\left[ {\begin{matrix}
				{{1}}&0\\
				{\alpha^u}&{\beta^u}\\
				{\alpha^v  + \lambda \alpha^u}&{\lambda \beta^u}
		\end{matrix}} \right].
	\end{align*}
	Substituting  (\ref{Eq:sys matrix}) into the above equation, we get
	$$
	\begin{aligned}
	 	\left[ {\begin{matrix}
				{\tilde E_1^{\dagger}\circ \psi \cdot \tilde F_1\circ \psi}&{\tilde E_1^{\dagger}\circ \psi\cdot\tilde  G_1\circ \psi}\\
				{\tilde F_2\circ \psi}&{\tilde G_2\circ \psi}
		\end{matrix}} \right] = \left[ {\begin{matrix}
				{\frac{{\partial \psi }}{{\partial x}}}&{\frac{{\partial \psi }}{{\partial x}}\gamma }\\
				0&{\eta }
		\end{matrix}} \right]\left[ {\begin{matrix}
				E^{\dagger}_1F_1&E^{\dagger}_1G_1&g^v\\
				{F_2}&{G_2}&0
		\end{matrix}} \right]\left[ {\begin{matrix}
				{{1}}&0\\
				{\alpha^u}&{\beta^u}\\
				{\alpha^v  + \lambda \alpha^u}&{\lambda \beta^u}
		\end{matrix}} \right].
	\end{aligned}
	$$
	Premultiply the above equation by $$
	\left[ {\begin{matrix}
			\tilde E_1\circ\psi&0\\
			0&I_p
	\end{matrix}} \right]=	\left[ {\begin{matrix}
			Q_1E_1\left(\frac{{\partial \psi }}{{\partial x}}\right)^{-1}&0\\
			0&I_p
	\end{matrix}} \right]$$ to  get
	\begin{align}\label{Eq:fgtildefg}
		\left[ {\begin{matrix}
				\tilde F_1\circ \psi&\tilde G_1\circ \psi\\
				{\tilde F_2\circ \psi}&{\tilde G_2\circ \psi}
		\end{matrix}} \right] = \left[ {\begin{matrix}
				Q_1&Q_1E_1\gamma  \\
				0&{\eta }
		\end{matrix}} \right]\left[ {\begin{matrix}
				F_1&G_1\\
				{F_2}&{G_2}
		\end{matrix}} \right]\left[ {\begin{matrix}
				1&0\\
				{\alpha^u}&{\beta^u}\\
		\end{matrix}} \right].
	\end{align}
	Now from equations (\ref{Eq:EtildeE}), (\ref{Eq:fgtildefg}) and Definition \ref{Def:ex-fb-equi}, it can be seen {that} $\Xi^{u}\mathop  \sim \limits^{ex-fb}\tilde \Xi^{\tilde u}$ via the transformations defined by $\tilde x=\psi(x)$, $Q=\left[ {\begin{smallmatrix}
			Q_1&Q_1E_1\gamma  \\
			0&{\eta }
	\end{smallmatrix}} \right]$, $\alpha^u$ and $\beta^u$.
	
	\emph{Only if.} Suppose that $\Xi^u\mathop  \sim \limits^{ex-fb}\tilde \Xi^{\tilde u}$ (in  a neighborhood $U$ of $x_p$).
	Assume that $\Xi^{u}$ and $\tilde \Xi^{u}$ are ex-fb-equivalent via an invertible matrix-valued function $Q=\left[ {\begin{smallmatrix}
			{{Q_1}}&{{Q_2}}\\
			{{Q_3}}&{{Q_4}}
	\end{smallmatrix}} \right]$,  $\tilde x =\psi(x)$, $\alpha^u$, $\beta^u$, where $Q_1:U\rightarrow\mathbb{R}^{r\times r}$ and $Q_2$, $Q_3$, $Q_4$ are matrix-valued functions of appropriate sizes.   Then by 
	$$
	Q E =\tilde E\circ\psi { \frac{{\partial \psi  }}{{\partial x}}}\Rightarrow\left[ {\begin{matrix}
			{{Q_1}}&{{Q_2}}\\
			Q_3&{{Q_4}}
	\end{matrix}} \right]\left[ {\begin{matrix}
			{{E_1}}\\
			0
	\end{matrix}} \right] = \left[ {\begin{matrix}
			\tilde E_1\circ \psi   \\
			0
	\end{matrix}} \right]\frac{{\partial \psi }}{{\partial x}},$$
	we can deduce that   
	\begin{align}\label{Eq:EtildeE1}
		\tilde E_1\circ \psi=Q_1E_1\left( \frac{{\partial \psi }}{{\partial x}} \right)^{-1}. 
	\end{align}
	Moreover, we have $Q_3=0$ and $Q_1$ is invertible (since both $E_1$ and $\tilde E_1$ are of full row rank), which implies {that} $Q_4$ is invertible as well (since $Q$ is invertible).
	Subsequently, by 
	\begin{align*}
		\begin{aligned}
			\tilde F\circ \psi =Q(F+G\alpha^u)\Rightarrow  \left[ {\begin{matrix}
					\tilde F_1\circ\psi\\
					\tilde F_2\circ\psi
			\end{matrix}} \right] =\left[ {\begin{matrix}
					{{Q_1}}&{{Q_2}}\\
					0&{{Q_4}}
			\end{matrix}} \right]\left(\left[ {\begin{matrix}
					F_1\\
					F_2
			\end{matrix}} \right]+\left[ {\begin{matrix}
					G_1\\
					G_2
			\end{matrix}} \right]\alpha^u \right),
		\end{aligned}
	\end{align*}
	we have 
	\begin{align}\label{Eq:f1}
		\tilde F_1\circ\psi  =Q_1(F_1+G_1\alpha^u)+Q_2(F_2+G_2\alpha^u) 
	\end{align}
	and 
	\begin{align} \label{Eq:f2}
		\tilde F_2\circ\psi &=Q_4(F_2+G_2\alpha^u).
	\end{align}
	Moreover, by $$\tilde G\circ\psi  =QG\beta^u\Rightarrow \left[ {\begin{matrix}
			\tilde G_1\circ\psi\\
			\tilde G_2\circ\psi
	\end{matrix}} \right]=\left[ {\begin{matrix}
			{{Q_1}}&{{Q_2}}\\
			0&{{Q_4}}
	\end{matrix}} \right]\left[ {\begin{matrix}
			G_1\\
			G_2
	\end{matrix}} \right]\beta^u,$$ we have
	\begin{align}\label{Eq:g1}
		\tilde G_1\circ \psi =Q_1G_1\beta^u+Q_2G_2\beta^u 
	\end{align}
	and 
	\begin{align}\label{Eq:g2}
		\tilde G_2\circ \psi=Q_4G_2\beta^u.
	\end{align}
	
	Recall the system matrices given in  (\ref{Eq:sys matrix}). First, from ${\rm Im\,}g^v=\ker E_1$, ${\rm Im\,}\tilde g^{\tilde v}\circ\psi=\ker \tilde E_1\circ\psi$, and equation (\ref{Eq:EtildeE1}), it is seen that there exists $\beta^v:U\rightarrow GL(s,\mathbb{R})$ such that
	\begin{align}\label{Eq:Bv}
		\tilde g^{\tilde v}\circ \psi={ \frac{{\partial \psi }}{{\partial x}} }g^v\beta^v.
	\end{align}
	Secondly, by equations (\ref{Eq:EtildeE1}) and (\ref{Eq:f1}), we have
	\begin{align}\label{Eq:proofDACS1} 
		\begin{aligned}
			\tilde f\circ \psi&=\tilde E^{\dagger}_1\circ \psi\tilde F_1\circ \psi  =\frac{{\partial \psi }}{{\partial x}} E^{\dagger}_1Q^{-1}_1 \left[ {\begin{matrix}
					Q_1&Q_2
			\end{matrix}} \right]\left[ {\begin{matrix}
					F_1+G_1\alpha^u\\F_2+G_2\alpha^u
			\end{matrix}} \right]  = \frac{{\partial \psi }}{{\partial x}} E^{\dagger}_1Q^{-1}_1\left[ {\begin{matrix}
					Q_1&Q_2
			\end{matrix}} \right]\left[ {\begin{matrix}
					F_1+G_1\alpha^u+E_1g^v\left(\lambda\alpha^u+\alpha^v \right)\\F_2+G_2\alpha^u
			\end{matrix}} \right]  \\
			&=\frac{{\partial \psi }}{{\partial x}} \left(f\!+\!g^u\alpha^u\!+\!g^v\left(\lambda\alpha^u\!+\!\alpha^v \right)\!+\!\gamma\left(h\!+\!l^u\alpha^u \right) \right), 
		\end{aligned}
	\end{align}
	where $\gamma=E^{\dagger}_1Q^{-1}_1Q_2$, and $\alpha^v$ and $\lambda$ are matrix-valued functions of appropriate sizes.  Thirdly, by equation (\ref{Eq:g1}), we have
	\begin{align}\label{Eq:proofDACS2}
		\begin{aligned}
			\tilde g^{\tilde u}\circ \psi&=\tilde E^{\dagger}_1\circ \psi\tilde G_1\circ \psi
			 =\frac{{\partial \psi }}{{\partial x}}E^{\dagger}_1Q^{-1}_1 \left[ {\begin{matrix}
					Q_1&Q_2
			\end{matrix}} \right]\left[ {\begin{matrix}
					G_1\beta^u\\G_2\beta^u
			\end{matrix}} \right]  
			 =\frac{{\partial \psi }}{{\partial x}} E^{\dagger}_1Q^{-1}_1\left[ {\begin{matrix}
					Q_1&Q_2
			\end{matrix}} \right]\left[ {\begin{matrix}
					G_1\beta^u+E_1g^v\lambda\\G_2\beta^u
			\end{matrix}} \right]  
		 =  \frac{{\partial \psi }}{{\partial x}} \left(g^u\beta^u+g^v\lambda+\gamma l^u\beta^u  \right).
		\end{aligned}
	\end{align}
	Note that we use the equations $E_1g^v\left(\lambda\alpha^u+\alpha^v \right)=0$ and $E_1g^v\lambda=0$ to deduce (\ref{Eq:proofDACS1}) and (\ref{Eq:proofDACS2}).  At last, by equations (\ref{Eq:f2}) and (\ref{Eq:g2}) we have 
	\begin{align}\label{Eq:proofDACS3}
		\tilde h\circ \psi =\tilde F_2\circ \psi=Q_4(F_2+G_2\alpha^u)=Q_4\left(h+l^u\alpha^u \right)
	\end{align}
	and 
	\begin{align}\label{Eq:proofDACS4}
		\tilde l^{\tilde u}\circ \psi=\tilde G_2\circ \psi=Q_4G_2\beta^u=Q_4l^u\beta^u .
	\end{align}
	Finally, it can be seen from (\ref{Eq:proofDACS1}), (\ref{Eq:proofDACS2}), (\ref{Eq:proofDACS3}) and (\ref{Eq:proofDACS4}), that $\Sigma^{uv}\mathop  \sim \limits^{sys-fb}\tilde \Sigma^{\tilde u \tilde v}$ via $\tilde x=\psi(x)$, $\alpha^v$, $\beta^v$, $\alpha^u$, $\beta^u$, $\lambda$, $\gamma=E^{\dagger}_1Q^{-1}_1Q_2$ and $\eta=Q_4$. 
\end{proof}
\section{External and internal feedback linearization}\label{Sec:4} 
In this section, we discuss the problem that when a nonlinear DACS of the form (\ref{Eq:DACS0}) is locally externally or internally feedback equivalent to a linear DACS of the form (\ref{Eq:linDAEcontol}) with complete controllability. 
First, we review some definitions and criteria for the complete controllability of linear DACSs. We denote by $A^{-1}\mathscr B$, the preimage of a space $\mathscr B$ under a linear map $A$. The augmented Wong sequences (see e.g.,  \cite{lewis1992tutorial,berger2013controllability,chen2021from}) of a linear DACS $\Delta^u_{l,n,m}=(E,H,L)$, given by  (\ref{Eq:linDAEcontol}), are  
\begin{align}
	\mathscr V_0:=\mathbb R^n, \quad\quad \mathscr V_{i+1}:=H^{-1}(E\mathscr V_{i}+{\rm Im\,}L), \ \ i\ge 0; \label{Eq:Vchap4}\\
	\mathscr W_0:= 0, \quad\quad \mathscr W_{i+1}:=E^{-1}(H\mathscr W_{i}+{\rm Im\,}L), \ \ i\ge 0. \label{Eq:Wchap4}
\end{align}
Additionally, recall the  following sequence of subspaces (see e.g. \cite{lewis1992tutorial}):
\begin{align}\label{Eq:Wchap4addition}
	\hat{\mathscr W}_1:= \ker E, \quad\quad  \hat{\mathscr W}_{i+1}:=E^{-1}(H\hat{\mathscr W}_{i}+{\rm Im\,}L), \  i\ge 1.
\end{align}	
For simplicity of notation, we denote $ 
	K_{\beta}={\rm diag}\{K_{\beta_1},\ldots,K_{\beta_k}\}\in \mathbb R^{(|\beta|-k)\times |\beta|}$, $
	L_{\beta} ={\rm diag}\{L_{\beta_1},\ldots,L_{\beta_k}\}\in \mathbb R^{(|\beta|-k)\times |\beta|}$, $
	\mathcal E_{\beta} ={\rm diag}\{e_{\beta_1},\ldots,e_{\beta_k}\}\in \mathbb R^{|\beta|\times k}$, $
	N_{\beta} ={\rm diag}\{N_{\beta_1},\ldots,N_{\beta_k}\}\in \mathbb R^{|\beta|\times |\beta|} $, 
where $\beta$ is a multi-index $\beta=(\beta_1,\dots,\beta_k)$ and $\left| \beta \right|=\sum\limits_{i=1}^{k} {\beta_i}$, and where
$$
\begin{array}{c}
	K_{\beta_i}=\left[ {\begin{smallmatrix}
			0&I_{{\beta_i}-1}
	\end{smallmatrix}} \right]\in \mathbb{R}^{({\beta_i}-1)  \times {\beta_i} }, \quad e_{{\beta_i}}=\left[ {\begin{smallmatrix}
			0\\
			1
	\end{smallmatrix}} \right]\in \mathbb{R}^{{ {{\beta_i}}} },\quad 	L_{\beta_i}=\left[ {\begin{smallmatrix}
			I_{{\beta_i}-1}&0
	\end{smallmatrix}} \right]\in \mathbb{R}^{({\beta_i}-1)  \times {\beta_i}}, \quad  N_{\beta_i}=\left[ {\begin{smallmatrix}
			0&0\\
			I_{{\beta_i}-1}&0
	\end{smallmatrix}} \right]\in \mathbb{R}^{{\beta_i}  \times {\beta_i}}. 
\end{array}
$$
Definition \ref{Def:ex-fb-equi}
applied to linear systems says that two linear DACSs
$\Delta^u_{l,n,m}\!=\!(E,H,L)$ and $\tilde \Delta^{\tilde u}_{l,n,m}\!=\!(\tilde E,\tilde H,\tilde L)$ are ex-fb-equivalent if
there exist constant invertible matrices $Q$, $P$, $S$ and a matrix $R$ such
that ${\tilde{E}=QEP^{-1}}$, $\tilde{H}=Q(H+LR)P^{-1}$, $\tilde{L}=QLS$.
\begin{defn} [complete controllability in \cite{berger2013controllability}]
	A linear DACS $\Delta^u_{l,n,m}=(E,H,L)$  is completely controllable if for any $x_0,x_1\in \mathbb R^n$,  there exist a solution $(x,u)$ of $\Delta^u$ and $t\in \mathbb R^{+}$ such that $x(0)=x_0$ and $x(t)=x_1$.
\end{defn}
\begin{lem}{\rm  \cite{berger2013controllability}}\label{Lem:controllability} 
	For a linear DACS $\Delta^u_{l,n,m}=(E,H,L)$, the following statements are equivalent: 
	\begin{enumerate}
		
		\item[(i)] $\Delta^u$ is completely controllable.

		\item[(ii)] ${\rm Im\,} E+{\rm Im\,}H+{\rm Im\,}L\!=\!{\rm Im\,} E+{\rm Im\,}L$ and ${\rm Im\,}_{\mathbb C }E+{\rm Im\,}_{\mathbb C }H+{\rm Im\,}_{\mathbb C }L={\rm Im\,}_{\mathbb C }(\lambda E-H)+{\rm Im\,}_{\mathbb C }L$, $ \forall\lambda\in \mathbb C$.
		
		\item[(iii)] $\mathscr V^*\cap \mathscr W^*=\mathbb R^n$, where $\mathscr V^*$ and $\mathscr W^*$ are the limits of the augmented Wong sequences  (\ref{Eq:Vchap4}) and (\ref{Eq:Wchap4}), respectively;
		
		\item[(iv)] $\Delta^u$ is ex-fb-equivalent (under linear transformations) to
		\begin{align*}
			\left[ {\begin{smallmatrix}
					{{I_{\left| \rho  \right|}}}&0\\
					0&{{L_{\bar \rho }}}\\0&0\\0&0
			\end{smallmatrix}} \right]\left[ {\begin{smallmatrix}
					{{{\dot \xi}_1}}\\
					{{{\dot \xi}_2}}
			\end{smallmatrix}} \right] = \left[ {\begin{smallmatrix}
					{N_\rho ^T}&0\\
					0&{{K_{\bar \rho }}}\\0&0\\0&0
			\end{smallmatrix}} \right]\left[ {\begin{smallmatrix}
					{{\xi_1}}\\
					{{\xi_2}}
			\end{smallmatrix}} \right] + \left[ {\begin{smallmatrix}
					{{\mathcal E_\rho }}&0\\
					0&0\\
					0&I_{m-m^*}\\0&0
			\end{smallmatrix}} \right] \left[ {\begin{smallmatrix}
					u_1\\
					u_2
			\end{smallmatrix}} \right],
		\end{align*}
		where $\rho=(\rho_1,\dots,\rho_{m^*})$ and $\bar \rho=(\bar \rho_1,\dots,\bar \rho_{s^*})$ are multi-indices, and  $s^*=n-\rk E$.  
	\end{enumerate}
\end{lem}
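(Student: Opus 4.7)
The plan is to close the loop (iv) $\Rightarrow$ (i) $\Rightarrow$ (ii) $\Rightarrow$ (iii) $\Rightarrow$ (iv), with the first and last implications doing the structural work and the two middle ones being Hautus-type linear-algebraic reductions. For (iv) $\Rightarrow$ (i), once $\Delta^u$ is in the displayed block form I would verify controllability block by block: the $\xi_1$-equation $\dot \xi_1 = N_\rho^T \xi_1 + \mathcal E_\rho u_1$ is a Brunovsky normal form of an ODE control system and is thus completely controllable, while the underdetermined DAE $L_{\bar\rho}\dot\xi_2 = K_{\bar\rho}\xi_2$ carries $s^* = n-\rk E$ free scalar degrees of freedom (the ``driving variables''), which is more than enough to smoothly interpolate $\xi_2$ between any two prescribed endpoints. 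The algebraic rows $0 = u_2$ and $0=0$ only constrain the unused input component and add no obstruction, and since ex-fb-equivalence preserves reachable sets up to a linear change of coordinates, complete controllability lifts back to the original $\Delta^u$.

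For (i) $\Rightarrow$ (ii) I would argue by contraposition via a Hautus-type test. If the real image identity ${\rm Im}\,E + {\rm Im}\,H + {\rm Im}\,L = {\rm Im}\,E + {\rm Im}\,L$ fails, choose a row vector $y^T$ annihilating $E$ and $L$ with $y^T H \neq 0$; then $y^T H x(t) \equiv 0$ along every admissible solution, so no state with $y^T H x_1 \neq 0$ can be reached. A failure of the complex identity at some $\lambda \in \mathbb C$ produces, via a left null pair of $[\lambda E-H,\ L]$, a functional of the form $e^{-\lambda t} y^T E x(t)$ that is invariant along solutions and again obstructs complete controllability. The equivalence (ii) $\Leftrightarrow$ (iii) is a purely linear-algebraic translation: one inductively characterises the limits $\mathscr V^*$ of (\ref{Eq:Vchap4}) and $\mathscr W^*$ of (\ref{Eq:Wchap4}) as the largest/smallest subspaces satisfying the reachability/observability-type inclusions encoded by (ii), and a dimension count turns $\mathscr V^* \cap \mathscr W^* = \mathbb R^n$ into the two image identities.

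The implication (iii) $\Rightarrow$ (iv), which I expect to be the main obstacle, is the construction of a feedback Kronecker canonical form adapted to the augmented Wong filtration. Using $\mathscr V^* \cap \mathscr W^* = \mathbb R^n$, I would pick bases of $\mathbb R^n$ and of $\mathbb R^l$ in which the successive jumps of $\{\mathscr V_i\}$ and of $\{\mathscr W_i\}$, together with $\hat{\mathscr W}_i$ from (\ref{Eq:Wchap4addition}), produce the Brunovsky-Kronecker indices $\rho$ (controllable ODE part) and $\bar\rho$ (underdetermined part). One then applies a state change $P$, a feedback $R$ on $u$, an input multiplication $S$, and a row operation $Q$ to kill all off-diagonal couplings between the $\rho$- and $\bar\rho$-blocks and to pull the input matrix into the prescribed shape. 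The delicate point is the simultaneous bookkeeping of all four transformations; it parallels the classical Brunovsky-form derivation for ODEs but is complicated by the rectangular shape of $E$ and by the $\bar\rho$-block, which has to be handled via the output-injection/driving-variable mechanism already exploited in Proposition \ref{Pro:explinonDACS}.
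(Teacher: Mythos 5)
First, a contextual point: the paper does not prove this lemma at all --- it is imported directly from \cite{berger2013controllability} and used as a black box, so there is no in-paper proof to compare your argument against. On its own merits, your cycle (iv)$\Rightarrow$(i)$\Rightarrow$(ii)$\Rightarrow$(iii)$\Rightarrow$(iv) is structurally sensible and its two easy legs are sound: the block-by-block verification of (iv)$\Rightarrow$(i) (Brunovsk\'y integrator chains for $\xi_1$, one free scalar component per $\bar\rho$-block driving an integrator chain for $\xi_2$, the row $0=u_2$ constraining only the unused input) is correct, and the Hautus-type contrapositive for (i)$\Rightarrow$(ii) works: a left annihilator $y$ of $[E,\,L]$ with $y^TH\neq 0$ traps every solution in the proper subspace $\ker (y^TH)$, and a left annihilator of $[\lambda E-H,\,L]$ that does not annihilate $[E,\,H,\,L]$ necessarily satisfies $y^TE\neq 0$ and yields the conserved quantity $e^{-\lambda t}y^TEx(t)$, which obstructs reaching states outside $\ker(y^TE)$ from states inside it.

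The genuine gaps are in the two remaining legs, which is where essentially all the content of the lemma lives. For (ii)$\Rightarrow$(iii) you assert that an inductive characterisation plus ``a dimension count turns $\mathscr V^*\cap\mathscr W^*=\mathbb R^n$ into the two image identities'', but no such count is given and none is straightforward: the limits of the augmented Wong sequences (\ref{Eq:Vchap4})--(\ref{Eq:Wchap4}) are not functions of the three images ${\rm Im\,}E$, ${\rm Im\,}H$, ${\rm Im\,}L$ alone, and in \cite{berger2013controllability} the equivalence of the Hautus-type criterion with the geometric one is itself obtained by evaluating both sides on the feedback canonical form, not by an abstract subspace argument. For (iii)$\Rightarrow$(iv) your paragraph amounts to announcing that a feedback Kronecker canonical form adapted to the Wong filtration exists and that condition (iii) eliminates every block other than the $\rho$- and $\bar\rho$-blocks; the actual construction of that form under the four transformations $(Q,P,R,S)$ --- in particular the elimination of the nilpotent/overdetermined and uncontrollable ODE blocks and the identification of the jumps of $\mathscr V_i$, $\mathscr W_i$, $\hat{\mathscr W}_i$ with the Kronecker indices $\rho$ and $\bar\rho$ --- is the substance of the Loiseau--\"Oz\c{c}ald{\i}ran--Malabre--Karcanias canonical-form theorem and cannot be compressed into the bookkeeping remark you give. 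As written, the proposal establishes (iv)$\Rightarrow$(i)$\Rightarrow$(ii) and defers the rest to results it does not actually prove.
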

We define  (locally) internal and (locally) external feedback linearizability of nonlinear DACSs  as follows.
\begin{defn}  \label{Def:fb-lin-DACS}
	Consider a DACS $\Xi^u_{l,n,m}=(E,F,G)$ and fix an admissible point $x_a\in X$.
	Then $\Xi^u$ is called locally internally (resp. externally) feedback linearizable around $x_a$  if $\Xi^u$ is locally in-fb-equivalent (resp. ex-fb-equivalent) to a linear DACS with complete controllability around $x_a$.
\end{defn}
We consider an ODECS   $\Sigma^{uv}_{n,m,s,p}=(f,g^u,g^v,h,l^u)$, given by (\ref{Eq:controlsys1}). If  $\Sigma^{uv}$ has no outputs, we denote it by $\Sigma^{uv}_{n,m,s}=(f,g^u,g^v)$. Then for $\Sigma^{uv}_{n,m,s}=(f,g^u,g^v)$, define the following two sequences of  distributions $\mathcal D_i$ and $\hat{\mathcal D}_i$, called the \emph{linearizability distributions} of $\Sigma^{uv}$,
\begin{align}\label{Eq:disD1}
	 \left\lbrace {\begin{array}{l@{\ }l}
			\mathcal D_0&:=\{0\},\\
			\mathcal D_1&:={\rm span}\left\lbrace g^u_1,\ldots,g^u_m,g^v_1,\ldots,g^v_s \right\rbrace, \\
			\mathcal D_{i+1}&:=\mathcal D_i+[f,\mathcal D_i], \ \ \ i=1,2,\ldots,
	\end{array}} \right.\quad\quad \quad \left\lbrace \begin{array}{l@{\ }l}
		\\
		\hat{\mathcal D}_1&:={\rm span}\left\lbrace g^v_1,\ldots,g^v_s \right\rbrace, \\
		\hat{\mathcal D}_{i+1}&:=\mathcal D_{i}+[f,\hat{\mathcal D}_i], \ \ \ \ i=1,2,\ldots.
	\end{array}\right.  
\end{align}
\begin{rem}\label{rem:sequencesD}
	Consider a linear DACS $\Delta^u=(E,H,L)$, denote $\mathscr W_i(\Delta^u)$ and $\hat {\mathscr W}_i(\Delta^u)$ as the subspaces $\mathscr W_i$, given by (\ref{Eq:Wchap4}), and $\hat {\mathscr W}_i$, given by (\ref{Eq:Wchap4addition}), of $\Delta^u$, respectively. For a linear ODECS  $\Lambda^{uv}=(A,B^u,B^v,C,D^u)$ (of the form (\ref{Eq:controlsys1}) but with constant system matrices), define the following two sequences of subspaces
	$$
	\begin{aligned}
		{\mathcal W}_0:=\{0\},\quad\quad  
		{{  {\mathcal{W}}}_{i + 1}} := \left[ {\begin{smallmatrix}
				A&B^w
		\end{smallmatrix}} \right]\left(  {\left[ {\begin{smallmatrix}
					{{{ {\mathcal{W}}}_i}}\\
					\mathbb R^{m+s}
			\end{smallmatrix}} \right] \cap \ker \left[ {\begin{smallmatrix}
					C&D^w
			\end{smallmatrix}} \right]} \right), \ i\ge0,
	\end{aligned}
	$$ 
	and	
	$$
	\begin{aligned}
		\hat {\mathcal W}_{1}:={{\rm Im\,} B^v}, \quad\quad
		{{\hat  {\mathcal{W}}}_{i + 1}} := \left[ {\begin{smallmatrix}
				A&B^w
		\end{smallmatrix}} \right]\left( {\left[ {\begin{smallmatrix}
					{{{\hat  {\mathcal{W}}}_i}}\\
				\mathbb R^{m+s}
			\end{smallmatrix}} \right] \cap \ker \left[ {\begin{smallmatrix}
					C&D^w
			\end{smallmatrix}} \right]} \right),\ i\ge1,
	\end{aligned}
	$$
	where $w=(u,v)$, $B^w=[B^u,B^v]$ and $D^w=[D^u,0]$. We have proved in Proposition 2.10 of \cite{chen2021from} that if $\Lambda^{uv}\in\mathbf{Expl}(\Delta^u)$, then 
	$$
	\begin{aligned}
		\mathscr W_i(\Delta^u)=\mathcal W_i(\Lambda^{uv}), \  \forall i\ge0, \quad \quad \hat {\mathscr W}_i(\Delta^u) =\hat {\mathcal W}_i(\Lambda^{uv}),\  \forall i\ge 1.
	\end{aligned}
	$$
	Apparently, ${\mathcal W}_i$ and $\hat {\mathcal W}_i$ are  linear counterparts of $\mathcal D_i$ and $\hat{\mathcal D}_i$, respectively, but they are for linear systems with outputs. 
\end{rem}
\begin{thm}[internal feedback linearization]\label{Thm:IFL}
	Consider a DACS $\Xi^u_{l,n,m}=(E,F,G)$, fix an admissible point $x_a\in X$. Let $M^*$ be the $n^*$-dimensional locally maximal controlled invariant submanifold of $\Xi^u$ around $x_a$.  Assume that the   constant rank assumption $\mathbf{(CR)}$   is satisfied  for $x\in M^*$ around $x_a$.  Then $\Xi^u|_{M^*}$ is a DACS $\Xi^{u^*}_{r^*,n^*,m^*}=(E^*,F^*,G^*)$ of the form (\ref{Eq:restr}) and its explicitation  $\mathbf{Expl}(\Xi^u|_{M^*})$   is a class of ODECSs without outputs. The DACS $\Xi^u$ is locally internally feedback linearizable  if and only if for one (and thus any) ODECS   $\Sigma^{u^*v^*}=(f^*,g^{u^*},g^{v^*})\in \mathbf{Expl}(\Xi^u|_{M^*})$, the linearizability distributions $\mathcal D_i $ and $\hat{\mathcal D}_i$ of $\Sigma^{u^*v^*}$ satisfy the following conditions on $M^*$ around $x_a:$
	\begin{enumerate} 
		\item [(FL1)] $\mathcal D_i$ and $\hat{\mathcal D}_i$ are  of constant rank for $1\le i \le n^*$.
		\item [(FL2)]  ${\mathcal D}_{n^*}=\hat{\mathcal D}_{n^*}=TM^*$.
		\item [(FL3)] $\mathcal D_i$ and $\hat{\mathcal D}_i$ are involutive for $1\le i \le n^*-1$.
	\end{enumerate}
\end{thm}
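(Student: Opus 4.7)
The starting point is Definition~\ref{Def:fb-lin-DACS} combined with Theorem~\ref{Thm:mainnonDACS}: the DACS $\Xi^u$ is locally internally feedback linearizable around $x_a$ if and only if the restriction $\Xi^u|_{M^*}$ is locally ex-fb-equivalent to a completely controllable linear DACS. Because $E^*$ of $\Xi^u|_{M^*}=(E^*,F^*,G^*)$ is of full row rank $r^*$, any explicitation $\Sigma^{u^*v^*}\in\mathbf{Expl}(\Xi^u|_{M^*})$ has no outputs, so Theorem~\ref{Thm:mainnonDACS} reduces the problem to the following question: when is $\Sigma^{u^*v^*}$ locally sys-fb-equivalent to an explicitation of the canonical form given by Lemma~\ref{Lem:controllability}(iv)? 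The plan is then to proceed in two steps, necessity and sufficiency, and to identify the controllability indices $\rho$ and $\bar\rho$ with the rank jumps of the two linearizability sequences.

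For necessity, I would first compute an explicit $(Q,v)$-explicitation of the Brunovsky-like form from Lemma~\ref{Lem:controllability}(iv), after discarding the last two zero row-blocks (they correspond to trivial constraints already absorbed in the restriction to $M^*$). A direct computation shows that its linearizability distributions $\mathcal D_i$ and $\hat{\mathcal D}_i$ are of constant rank, pairwise involutive, and fill $TM^*$ at step $i=n^*$; more precisely the ranks of $\mathcal D_i$ jump by the number of indices $\rho_j$ with $\rho_j\ge i$ plus those of $\bar\rho_j\ge i$, while the ranks of $\hat{\mathcal D}_i$ involve only the $\bar\rho$-indices. Necessity then follows once we show that both sequences are invariants of sys-fb-equivalence. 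This invariance is verified by a direct inspection of the triangular feedback (\ref{Eq:trianfb}): the new drift $\tilde f$ and the new input vector fields $\tilde g^{\tilde u^*}$, $\tilde g^{\tilde v^*}$ differ from the old ones by terms lying in $\mathcal D_1$ (resp.\ in $\hat{\mathcal D}_1$ for the $\tilde g^{\tilde v^*}$-block), and a standard induction on $i$ propagates the invariance to all steps.

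For sufficiency, I would follow the spirit of the classical Brockett/Jakubczyk--Respondek argument adapted to two inputs of different status. Conditions (FL1) and (FL3), together with Frobenius' theorem, yield two flags of foliations on $M^*$. Extract first a set of functions $\hat\varphi_j$ whose differentials annihilate $\hat{\mathcal D}_{\bar\rho_j-1}$ but are nonzero on $\hat{\mathcal D}_{\bar\rho_j}$; their repeated Lie derivatives $L_f^k\hat\varphi_j$, $0\le k\le\bar\rho_j-1$, produce the ``$v^*$-chains''. Then, working modulo these, extract functions $\varphi_i$ from the complementary flag of $\mathcal D_i/\hat{\mathcal D}_i$ and form the ``$u^*$-chains'' $L_f^k\varphi_i$, $0\le k\le\rho_i-1$. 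Condition (FL2) ensures that all these functions together form a local coordinate system on $M^*$. The original controls $u^*$ and $v^*$ appear linearly (and invertibly, by construction) at the top of each chain, so solving for them produces the linearizing feedback; sending $\Sigma^{u^*v^*}$ to the canonical explicitation is then immediate.

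The main obstacle, and the very reason two distribution sequences appear instead of one, is to guarantee that the linearizing feedback respects the \emph{triangular} structure (\ref{Eq:trianfb}): the $u^*$-feedback is not allowed to use $v^*$, whereas the $v^*$-feedback may use both $x$ and $u^*$. This forces the two-stage extraction above, first from $\hat{\mathcal D}_i$ and then from $\mathcal D_i/\hat{\mathcal D}_i$, together with a careful check that the coefficients of $u^*$ in the normal form do not involve $v^*$. The verification that the resulting linear system matches exactly Lemma~\ref{Lem:controllability}(iv), with $(\rho,\bar\rho)$ read off the rank jumps of $(\mathcal D_i,\hat{\mathcal D}_i)$, is then a bookkeeping exercise; the passage from the linear form on $M^*$ to global ex-fb-equivalence of $\Xi^u|_{M^*}$ with a completely controllable linear DACS closes the argument via Theorem~\ref{Thm:mainnonDACS} in the reverse direction.
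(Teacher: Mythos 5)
Your overall architecture coincides with the paper's: reduce to $\Xi^u|_{M^*}$, whose explicitation has no outputs; translate ex-fb-equivalence into sys-fb-equivalence via Theorem~\ref{Thm:mainnonDACS}; obtain necessity by checking (FL1)--(FL3) on a linear explicitation of the canonical form of Lemma~\ref{Lem:controllability}(iv) and invoking invariance of $\mathcal D_i,\hat{\mathcal D}_i$ under the triangular feedback (\ref{Eq:trianfb}); and obtain sufficiency by building two families of chains of functions whose Lie derivatives give linearizing coordinates and a triangular linearizing feedback, landing on the Brunovsk\'y form and hence on the completely controllable linear DACS. The only structural deviations are cosmetic: for necessity the paper routes the verification of (FL2) through the augmented Wong sequences $\mathscr W_i,\hat{\mathscr W}_i$ and their identification with $\mathcal W_i,\hat{\mathcal W}_i$ (Remark~\ref{rem:sequencesD}) rather than computing the distributions of the canonical form directly, and the paper writes out the sufficiency construction (Lemma~\ref{cl:1}) only for $m^*=s^*=1$.

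There is, however, one concrete point in your sufficiency step that would fail as written: the annihilation conditions you assign to the chain-generating functions are the wrong ones, and these conditions are exactly where the triangular structure of (\ref{Eq:trianfb}) is enforced. You require the $v^*$-chain generators $\hat\varphi_j$ to annihilate $\hat{\mathcal D}_{\bar\rho_j-1}$; but $\hat{\mathcal D}_{\bar\rho_j-1}$ does not contain $ad_f^{\bar\rho_j-2}g^{u^*}$, so with this choice one may have $L_{g^{u^*}}L_f^{\bar\rho_j-2}\hat\varphi_j\neq 0$, i.e., $u^*$ enters the $v^*$-chain below its top level and the chain structure is destroyed. The correct requirement, used in the paper's Appendix, is $\rd\hat\varphi_j\in\mathcal D_{\bar\rho_j-1}^{\bot}$. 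Dually, the $u^*$-chain generators must annihilate not merely $\mathcal D_{\rho_i-1}$ but the larger distribution $\hat{\mathcal D}_{\rho_i}$ (which additionally contains $ad_f^{\rho_i-1}g^{v^*}$): the condition $\rd\varphi_i\in\hat{\mathcal D}_{\rho_i}^{\bot}$ is what kills $L_{g^{v^*}}L_f^{\rho_i-1}\varphi_i$ and hence guarantees that the new control $\tilde u^*$ can be defined without using $v^*$ --- the ``careful check'' you defer is precisely this choice of annihilator, not an afterthought. With these corrected assignments (and the nondegeneracy of the resulting anti-diagonal pairing table, which (FL1)--(FL2) provide), your construction goes through and matches the paper's proof of Lemma~\ref{cl:1}.
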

\begin{proof}
	Since $\Xi^u$ satisfies condition \textbf{(CR)} around $x_a$, its $M^*$-restriction $\Xi^u|_{M^*}$ by Definition \ref{Def:restrictionDACS} is a DACS $\Xi^u|_{M^*}=\Xi^{u^*}_{r^*,n^*,m^*}=(E^*,F^*,G^*)$ of  the  form (\ref{Eq:restr}) with $E^*$ being of full row rank $r^*$. It follows by the full row rankness of   $E^*$  that  the maps $h=F_2$ and $l^{u^*}=G_2$ are absent in the explicitation systems of $\Xi^{u^*}$, which means that the output $y=h(x)+l^{u^*}(x)u^*$ is absent as well (see Definition \ref{Def:explicitation}). Thus an ODECS  $\Sigma^{u^*v^*}_{n^*,m^*,s^*}=(f^*,g^{u^*},g^{v^*})\in~ \mathbf{Expl}(\Xi^u|_{M^*})$ is a control system without outputs, which is in the form 
	\begin{align*}
		\Sigma^{w^*}:\dot z^*= f^*(z^*)+g^{u^*}(z^*)u^*+g^{v^*}(z^*)v^*,
	\end{align*}
	where $w^*=(u^*,v^*)$,  $f^*=(E^*)^{\dagger}F^*$, $g^{u^*}=(E^*)^{\dagger}G^*$, $\im g^{v^*}=\ker E^*$ and   $s^*=n^*-r^*$.
	
	\emph{Only if.} Suppose that $\Xi^u$ is locally internally feedback linearizable, which means   that  its $M^*$-restriction $\Xi^u|_{M^*}$, given by (\ref{Eq:restr}), is locally ex-fb-equivalent 
	to a completely controllable linear DACS 
	\begin{align*}
		\Delta^{\tilde u^*}:E^*\dot {\tilde z}^*=H^*\tilde z^*+L^*\tilde u^*,
	\end{align*}
	where $E^*$, $H^*$, $L^*$ are constant matrices of appropriate sizes. Then a linear ODECS  $\Lambda^{\tilde w^*}=(A^*,B^{\tilde u^*},B^{\tilde v^*})\in \mathbf{Expl}(\Delta^{\tilde u^*})$, where $\tilde w^*=(\tilde u^*,\tilde v^*)$, is of the form
	\begin{align*}
		\Lambda^{\tilde w^*}:	\dot {\tilde z}^*= A^*{\tilde z}^*+B^{\tilde u^*}\tilde u^*+B^{\tilde v^*}\tilde v^*.
	\end{align*}
	where $A^*=(E^*)^{\dagger}H^*$, $B^{\tilde u^*}=(E^*)^{\dagger}L^*$ and $\im B^{\tilde v^*}=\ker E^*$.	By Lemma \ref{Lem:controllability}, the complete controllability of $\Delta^{\tilde u^*}$ implies $\hat{\mathscr W}_{n^*}(\Delta^{\tilde u^*})=\mathscr W_{n^*}(\Delta^{\tilde u^*})=\mathbb R^{n^*}$. By Proposition~2.10  of  \cite{chen2021from} ({see also} Remark \ref{rem:sequencesD}(ii)), we get $$\hat {\mathcal W}_{n^*}(\Lambda^{\tilde w^*})\!=\!\mathcal W_{n^*}(\Lambda^{\tilde w^*})\!=\!\hat{\mathscr W}_{n^*}(\Delta^{\tilde u^*})\!=\!\mathscr W_{n^*}(\Delta^{\tilde u^*})\!=\!\mathbb R^{n^*}.$$ Since $\Lambda^{\tilde w^*}$ is a linear control system without outputs, we have $\hat{\mathcal D}_{n^*}(\Lambda^{\tilde w^*})=\hat {\mathcal W}_{n^*}(\Lambda^{\tilde w^*})$, $\mathcal D_{n^*}(\Lambda^{\tilde w^*})=\mathcal W_{n^*}(\Lambda^{\tilde w^*})$. Hence,  $ \hat{\mathcal D}_{n^*}(\Lambda^{\tilde w^*})=\mathcal D_{n^*}(\Lambda^{\tilde w^*})=\mathbb R^{n^*}$. Thus $\Lambda^{\tilde w^*}$ satisfies  (FL2). Moreover, since $\Lambda^{\tilde w^*}$ is a linear control system, it satisfies (FL1) and (FL3) in an obvious way. Notice  that the nonlinear system $\Sigma^{w^*}$ is locally sys-fb-equivalent to $\Lambda^{\tilde w^*}$ by Theorem \ref{Thm:mainnonDACS} because $\Sigma^{w^*}\in \mathbf{Expl}(\Xi^u|_{M^*})$, $\Delta^{\tilde w^*}\in \mathbf{Expl}(\Delta^{\tilde u^*})$ and $\Xi^u|_{M^*}\overset{ex-fb}{\sim}\Delta^{\tilde u^*}$. Since $\Sigma^{w^*}$ and $\Lambda^{\tilde w^*}$ are control systems without outputs, sys-fb-equivalence reduces to feedback equivalence. Thus $\Sigma^{w^*}$ and $\Lambda^{\tilde w^*}$  are locally feedback equivalent (via $\tilde z^*=\psi(z^*)$ and two kinds of feedback transformations defined by   $\alpha^{u^*},\alpha^{v^*},\beta^{u^*},\beta^{v^*},\lambda$, see Remark \ref{rem:fb-sys-eq}). It is easy to verify by a direct calculation that if $ \hat{\mathcal D}_{i}$ and $\mathcal D_{i}$ are involutive, then the two distribution sequences are invariant for the two feedback equivalent  control systems $\Sigma^{w^*}$ and $\Lambda^{\tilde w^*}$, i.e., $ \frac{\partial \psi}{\partial z^*}\hat{\mathcal D}_{i}(\Sigma^{w^*})=\hat{\mathcal D}_{i}(\Delta^{\tilde w^*})\circ \psi$ and $ \frac{\partial \psi}{\partial z^*}{\mathcal D}_{i}(\Sigma^{w^*})={\mathcal D}_{i}(\Delta^{\tilde w^*})\circ \psi$.  So the system $\Sigma^{w^*}$ being feedback equivalent to $\Lambda^{\tilde w^*}$ satisfies conditions (FL1)-(FL3) as well. It is seen from Proposition \ref{Pro:explinonDACS} that any other ODECS $\hat \Sigma^{\hat w^*}\in \mathbf{Expl}(\Xi^u|_{M^*})$ is sys-fb-equivalent to $\Sigma^{w^*}$, which means $\Sigma^{w^*}$ is feedback equivalent (via two kinds of feedback transformations) to $\hat\Sigma^{\hat w^*}$ as any explicitation system in $\mathbf{Expl}(\Xi^u|_{M^*})$ has no outputs. So any other explicitation system $\hat\Sigma^{\hat w^*}$    satisfies (FL1)-(FL3) of Theorem~\ref{Thm:IFL} as well.
	
	\emph{If}. Suppose that an ODECS  $\Sigma^{u^*v^*}\in \mathbf{Expl}(\Xi^u|_{M^*})$ satisfies  (FL1)-(FL3) around $x_a$.   Then   the following lemma holds.
	\begin{lem}\label{cl:1}
		The ODECS  $$\Sigma^{w^*}=\Sigma^{u^*v^*}_{n^*,m^*,s^*}=(f^*,g^{u^*},g^{v^*})$$ is locally feedback equivalent, via  two kinds of feedback transformations (see Remark \ref{rem:fb-sys-eq}), to the Brunovsk{\'y} {canonical} form  \cite{brunovsky1970classification} around $x_a$, which is given by
		\begin{align}\label{Eq:bnskf}
			\Sigma^{\tilde w^*}_{Br}=\Sigma^{\tilde u^*\tilde v^*}_{Br}:
			\left\lbrace \begin{aligned}
				\dot \xi_1=N^T_{\rho}\xi_1+\mathcal E_{\rho}\tilde u^*,\\
				\dot \xi_2=N^T_{\bar \rho}\xi_2+\mathcal E_{\bar \rho}\tilde v^*,
			\end{aligned} \right.
		\end{align}
		where $\tilde w^*=(\tilde u^*,\tilde v^*)$, and $\rho=(\rho_1,\dots,\rho_a)$ and $\bar \rho=(\bar \rho_1,\dots,\bar \rho_b)$ are multi-indices.  
	\end{lem}
	The proof of Lemma \ref{cl:1}  is technical and is put into Appendix.  Now we {will} prove {that} the $M^*$-restriction $\Xi^u|_{M^*}$, given by (\ref{Eq:restr}), is locally ex-fb-equivalent to a linear DACS
	\begin{align}\label{Eq:DACScontrollable}
		\Delta^{\tilde u^*}:\left[ {\begin{matrix}
				{{I_{\left| \rho  \right|}}}&0\\
				0&{{L_{\bar \rho }}}
		\end{matrix}} \right]\left[ {\begin{matrix}
				{{{\dot \xi}_1}}\\
				{{{\dot \xi}_2}}
		\end{matrix}} \right] = \left[ {\begin{matrix}
				{N_\rho ^T}&0\\
				0&{{K_{\bar \rho }}}
		\end{matrix}} \right]\left[ {\begin{matrix}
				{{\xi_1}}\\
				{{\xi_2}}
		\end{matrix}} \right] + \left[ {\begin{matrix}
				{{\mathcal E_\rho }}\\
				0
		\end{matrix}} \right]\tilde u^*.
	\end{align}
	Notice that by Lemma \ref{Lem:controllability},   the   linear DACS $\Delta^{\tilde u^*}$  is completely controllable. 
	Observe that  $\Sigma^{\tilde w^*}_{Br}\in  \mathbf{Expl}(\Delta^{\tilde u^*})$, because the $\xi_1$-subsystems of $\Sigma^{\tilde w^*}_{Br}$ and $\Delta^{\tilde u^*}$ coincide,  $N^T_{\bar \rho}=L^{\dagger}_{\bar \rho }K_{\bar \rho }$ and $\ker L_{\bar \rho }=\im \mathcal E_{\bar \rho} $. Recall that $\Sigma^{w^*}$ is locally sys-fb-equivalent to $\Sigma^{\tilde w^*}_{Br}$  (by Lemma \ref{cl:1}) and  $\Sigma^{w^*}\in \mathbf{Expl}(\Xi^u|_{M^*})$, it is seen that $\Xi^u|_{M^*}$ is locally ex-fb-equivalent to  $\Delta^{\tilde u^*}$ around $x_a$    by Theorem \ref{Thm:mainnonDACS}. Hence $\Xi^u$ is locally in-fb-equivalent to the complete controllable linear DACS $\Delta^{\tilde u^*}$,  i.e., $\Xi^u$ is locally internally feedback linearizable. 
\end{proof}
\begin{thm}[external feedback linearization]\label{Thm:CEFL}
	Consider a DACS $\Xi^u_{l,n,m}=(E,F,G)$, fix an admissible  point $x_a\in X$. Then $\Xi^u$ is locally externally feedback linearizable  around $x_a$  if and only if there exists a neighborhood $U\subseteq X$ of $x_a$ in which the following conditions are satisfied.
	\begin{enumerate} 
		\item [(EFL1)]  ${\rm rank\,} E(x)$ and ${\rm rank\,}[E(x),G(x)]$ are constant.
		\item [(EFL2)] $F(x)\in {\rm Im\,}E(x)+{\rm Im\,} G(x)$ or, equivalently, the locally maximal invariant submanifold $M^*=M^c_0=U$.
		\item [(EFL3)]  For  one (and thus any)  control system $\Sigma^{uv}\in \mathbf{Expl}(\Xi^u|_{M^*})$,  which is a system with no outputs on $M^*=U$,  the  linerizability  distributions $\mathcal D_i$ and $\hat{\mathcal D}_i$ satisfy (FL1)-(FL3) of Theorem \ref{Thm:IFL}.
	\end{enumerate}
\end{thm}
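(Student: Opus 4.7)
The plan is to reduce Theorem 4.3 to Theorem 4.2 (internal feedback linearization) combined with the structural observation that condition (EFL2) makes the in-fb-equivalence of Theorem 4.2 extend to an ex-fb-equivalence on the whole neighborhood $U$. Throughout, I would use Theorem \ref{Thm:mainnonDACS} to translate statements about DACSs into statements about their explicitations, and use the canonical form of Lemma \ref{Lem:controllability}(iv) for completely controllable linear DACSs.

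For the necessity direction, assume $\Xi^u \overset{ex-fb}{\sim} \Delta^{\tilde u}$ with $\Delta^{\tilde u}$ completely controllable. First, (EFL1) is immediate because under the ex-fb-equivalence relations $\tilde E(\psi(x)) = Q(x) E(x)\left(\frac{\partial \psi}{\partial x}\right)^{-1}$ and $\tilde G(\psi(x)) = Q(x)G(x)\beta^u(x)$, the ranks of $E$ and $[E,G]$ at corresponding points are preserved, and for the linear target these ranks are constant. Second, inspection of the canonical form of Lemma \ref{Lem:controllability}(iv) shows $F \in \operatorname{Im} E + \operatorname{Im} L$ at every point (the third block row $0=u_2$ constrains the control but not the state), so $M^*(\Delta^{\tilde u}) = \mathbb{R}^n$, which transports under the diffeomorphism $\psi$ to give $M^*(\Xi^u) \cap U = U$, i.e., (EFL2). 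Third, since ex-fb-equivalence trivially implies in-fb-equivalence, Theorem \ref{Thm:IFL} applied to $\Xi^u$ yields (FL1)-(FL3), which is exactly (EFL3).

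For the sufficiency direction, assume (EFL1)-(EFL3). By (EFL2), $M^* = U$, so in particular $n^* = n$; combined with (EFL1), the constant rank condition \textbf{(CR)} holds on $U$, so Theorem \ref{Thm:IFL} applies and gives $\Xi^u|_{M^*} \overset{ex-fb}{\sim} \Delta^{\tilde u^*}|_{\tilde M^*}$ for some completely controllable linear DACS $\Delta^{\tilde u^*}$. The task is to lift this ex-fb-equivalence of the restrictions to an ex-fb-equivalence between $\Xi^u$ itself and a completely controllable linear DACS in the canonical form of Lemma \ref{Lem:controllability}(iv). Because $M^* = U$, the passage $\Xi^u \to \Xi^u|_{M^*}$ involves no coordinate restriction to a proper submanifold: it consists only of the $Q$-transformation that produces the triangular form preceding (\ref{Eq:inDACS}) and the feedback (with $\bar F_3 = 0$ by (EFL2)) that yields (\ref{Eq:inDACS}). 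The same structural decomposition applies to the canonical form of Lemma \ref{Lem:controllability}(iv). I would glue the ex-fb-equivalence given by Theorem \ref{Thm:IFL} on the top differential block to the identity on the constraint block $0 = u_2$ and on the trivial zero block, thereby assembling a global $Q$, $\alpha^u$, $\beta^u$, and diffeomorphism $\psi$ on $U$ that realize the ex-fb-equivalence of $\Xi^u$ with the full Lemma \ref{Lem:controllability}(iv) form.

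The main obstacle is this lifting step in the sufficiency direction: one must verify that the $Q$-transformation and feedback used in constructing the restriction (and its inverse) can be spliced consistently with the transformations produced by Theorem \ref{Thm:IFL} to give block-triangular matrix-valued functions on $U$ whose blocks on the algebraic and trivial rows exactly match those in Lemma \ref{Lem:controllability}(iv). An alternative and perhaps cleaner route is to perform the entire argument at the level of explicitations via Theorem \ref{Thm:mainnonDACS}: when $M^* = U$, any $\Sigma^{uv} \in \mathbf{Expl}(\Xi^u)$ differs from the corresponding element of $\mathbf{Expl}(\Xi^u|_{M^*})$ only by the presence of an output $y = h + l^u u$ (absent after restriction because $E^*$ is full row rank), and one checks that this output component is compatible with the sys-fb-equivalence produced between $\Sigma^{uv}$ and the explicitation of the canonical controllable linear DACS; then Theorem \ref{Thm:mainnonDACS} translates this back into the desired ex-fb-equivalence at the DACS level.
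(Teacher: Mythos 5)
Your proposal is correct and follows essentially the same route as the paper: necessity via rank invariance, the controllability criterion of Lemma \ref{Lem:controllability} (you use item (iv) where the paper uses item (ii) to get (EFL2), an immaterial difference), and reduction to Theorem \ref{Thm:IFL}; sufficiency via the block decomposition made possible by $M^*=U$, applying Theorem \ref{Thm:IFL} to the top block, and gluing the resulting transformations with the identity on the algebraic and zero rows. The ``lifting step'' you flag as the main obstacle is exactly what the paper carries out by the explicit computation leading to (\ref{Eq:prf_Thm3_3}), and it goes through as you anticipate.
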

\begin{proof}
	\emph{Only if.} Suppose that $\Xi^u$ is locally externally feedback linearizable.  By definition, the DACS $\Xi^u$ is locally ex-fb-equivalent  to a linear completely controllable DACS (via  $Q(x)$, $z=\psi(x)$ and $u=\alpha^u(x)+\beta^u(x) \tilde u$) 
	\begin{align}\label{Eq:lcDACS}
		\Delta^{\tilde u}:\tilde E\dot z=\tilde Hz+\tilde L \tilde u.
	\end{align} 
	Thus by Definition \ref{Def:ex-fb-equi}, we have
	\begin{align}\label{Eq:ex-fb-equi}
		\begin{array}{r@{\, }l}
			Q(x)E(x) = \tilde E\cdot{ \frac{{\partial \psi (x)}}{{\partial x}}},\quad Q(x)(F(x)+G(x)\alpha^u(x)) =\tilde H\cdot\psi (x),\quad  Q(x)G(x)\beta^u(x) =\tilde L.
		\end{array}
	\end{align}
	It is clear that $\Delta^{\tilde u}$  satisfies  (EFL1).  So the system $\Xi^u$ satisfies  (EFL1) as well because the {ranks} of $E(x)$ and $[E(x),G(x)]$ are invariant under ex-fb-equivalence. The complete controllability of $\Delta^{\tilde u}$ implies $\tilde Hz\in {\rm Im\,} \tilde E+{\rm Im\,} \tilde L$ (see Lemma \ref{Lem:controllability}(ii)). By substituting (\ref{Eq:ex-fb-equi}), we get
	$$
	\begin{aligned}
	 Q(F+G\alpha^u)(x)\in {\rm Im\,} QE\left( { \frac{{\partial \psi }}{{\partial x}}}\right) ^{-1}(x)+ {\rm Im\,}QG\beta^u(x) & \Rightarrow F(x)+G(x)\alpha^u(x)\in {\rm Im\,}E(x)+{\rm Im\,}G(x) \\& \Rightarrow F(x)\in  {\rm Im\,}E(x)+{\rm Im\,}G(x).
	\end{aligned}
	$$
	Thus $\Xi^u$ satisfies  (EFL2). Notice that by (EFL2), we have that the locally maximal controlled invariant submanifold $M^*$ around $x_a$ coincides with the neighborhood  $U$. Observe that the restriction $\Delta^{\tilde u}|_{M^*}=\Delta^{\tilde u}|_{U}$,  whose canonical form is given by
	\begin{align*}
		\left[ {\begin{matrix}
				{{I_{\left| \rho  \right|}}}&0\\
				0&{{L_{\bar \rho }}} 
		\end{matrix}} \right]\left[ {\begin{matrix}
				{{{\dot \xi}_1}}\\
				{{{\dot \xi}_2}}
		\end{matrix}} \right] = \left[ {\begin{matrix}
				{N_\rho ^T}&0\\
				0&{{K_{\bar \rho }}} 
		\end{matrix}} \right]\left[ {\begin{matrix}
				{{\xi_1}}\\
				{{\xi_2}}
		\end{matrix}} \right] + \left[ {\begin{matrix}
				{{\mathcal E_\rho }}\\
				0
		\end{matrix}} \right]
		u^*,
	\end{align*}
	is also a linear {completely} controllable DACS as $\Delta^{\tilde u}$. This means that $\Xi^u$ is locally internally feedback linearizable.  Thus by Theorem \ref{Thm:IFL}, the DACS $\Xi^u$ satisfies (EFL3) on $M^*=U$.
	
	\emph{If.} Suppose that in a neighborhood $U$ of $x_a$, the DACS $\Xi^u$ satisfies (EFL1)-(EFL3). Denote ${\rm rank\,} E(x)=r$, ${\rm rank\,} [E(x),G(x)]=r+\tilde m^*$ and $m^*=m-\tilde m^*$. Then, by (EFL1), there exist  an invertible $Q(x)$ defined on $U$ and a partition of $u=(u_1,u_2)$ such that
	\begin{align*}
		Q(x)E(x)\dot x=Q(x)F(x)+Q(x)G(x)u \Rightarrow \left[ {\begin{matrix}
				E_1(x)\\
				0\\
				0
		\end{matrix}} \right]\dot x=\left[ {\begin{matrix}
				F_1(x)\\
				F_2(x)\\
				F_3(x)
		\end{matrix}} \right]+\left[ {\begin{matrix}
				G^1_1(x)&G_1^2(x)\\
				G^1_2(x)&G^2_2(x)\\
				0&0
		\end{matrix}} \right]\left[ {\begin{matrix}
				u_1\\
				u_2
		\end{matrix}} \right],
	\end{align*}
	where $E_1(x)$ is of full row rank $r$ and $G^2_2(x)$ is a $\tilde m^*\times \tilde m^*$ invertible matrix-valued function defined on $U$. Moreover, by  (EFL2), we have $F_3(x)=0$ for $x\in U$. Now we use {the} feedback transformation
	\begin{align*}
		\left[ {\begin{matrix}
				\tilde u_1\\
				\tilde u_2
		\end{matrix}} \right]		= \left[ {\begin{matrix}
				0\\
				F_2(x)
		\end{matrix}} \right]+ \left[ {\begin{matrix}
				I_{m^*}&0\\
				G_2^1(x)&G_2^2(x)
		\end{matrix}} \right]\left[ {\begin{matrix}
				u_1\\
				u_2
		\end{matrix}} \right],
	\end{align*}
	and the system becomes
	\begin{align*}
		\left[ {\begin{matrix}
				E_1(x)\\
				0\\
				0
		\end{matrix}} \right]\dot x=\left[ {\begin{matrix}
				\tilde F_1(x)\\
				0\\
				0
		\end{matrix}} \right]+\left[ {\begin{matrix}
				\tilde 	G^1_1(x)&\tilde G_1^2(x)\\
				0&I_{\tilde m^*}\\
				0&0
		\end{matrix}} \right]\left[ {\begin{matrix}
				\tilde u_1\\
				\tilde u_2
		\end{matrix}} \right],
	\end{align*}
	where $\tilde F_1=F_1-G^2_1(G^2_2)^{-1}F_2$, $\tilde G^1_1=G^1_1-G^2_1(G^2_2)^{-1}G^1_2$ and $\tilde G^2_1=G^2_1(G^2_2)^{-1}$.
	
		Premultiply the above equation by $\left[ {\begin{smallmatrix}
			I_{r}&-\tilde G_1^2(x)&0\\
			0&I_{\tilde m^*}&0\\
			0&0&I_{l-r-\tilde m^*}
	\end{smallmatrix}} \right]$ to get
	\begin{align}\label{Eq:prf_Thm3_3}
		\left[ {\begin{matrix}
				E^*(x)\\
				0\\
				0
		\end{matrix}} \right]\dot x=\left[ {\begin{matrix}
				F^*(x)\\
				0\\
				0
		\end{matrix}} \right]+\left[ {\begin{matrix}
				G^*(x)&0\\
				0&I_{\tilde m^*}\\
				0&0
		\end{matrix}} \right]\left[ {\begin{matrix}
				u^*\\
				\tilde  u^*
		\end{matrix}} \right],
	\end{align}
	where $E^*=E_1$, $F^*=\tilde F_1$, $G^*=\tilde G_1^1$, $u^*=\tilde u_1$ and $\tilde u^*=\tilde u_2$. Then by Definition \ref{Def:restrictionDACS}, we have that $\Xi^u|_{M^*}=\Xi^u|_{U}$ is the following system:
	\begin{align*}
		\Xi^u|_{M^*}:	E^*(x)\dot x=	F^*(x)+	G^*(x)u^*.
	\end{align*}
	By  Theorem \ref{Thm:IFL} and condition (EFL3), $\Xi^u|_{M^*}$ is locally ex-fb-equivalent (on $M^*=U$) to a linear DACS $\Delta^{\tilde u^*}$ of the form  (\ref{Eq:DACScontrollable}). It follows from (\ref{Eq:prf_Thm3_3})  that $\Xi^u$ is locally on $U$ ex-fb-equivalent to
	\begin{align*}
		\left[ {\begin{smallmatrix}
				{{I_{\left| \rho  \right|}}}&0\\
				0&{{L_{\bar \rho }}}\\0&0\\0&0
		\end{smallmatrix}} \right]\left[ {\begin{smallmatrix}
				{{{\dot \xi}_1}}\\
				{{{\dot \xi}_2}}
		\end{smallmatrix}} \right] = \left[ {\begin{smallmatrix}
				{N_\rho ^T}&0\\
				0&{{K_{\bar \rho }}}\\0&0\\0&0
		\end{smallmatrix}} \right]\left[ {\begin{smallmatrix}
				{{\xi_1}}\\
				{{\xi_2}}
		\end{smallmatrix}} \right] + \left[ {\begin{smallmatrix}
				{{\mathcal E_\rho }}&0\\
				0&0\\
				0&I_{\tilde m^*}\\0&0
		\end{smallmatrix}} \right] \left[ {\begin{smallmatrix}
				u^*\\
				\tilde u^*
		\end{smallmatrix}} \right],
	\end{align*}
	which is {completely} controllable by Lemma \ref{Lem:controllability}. Therefore, $\Xi^u$ is locally externally feedback linearizable by Definition \ref{Def:fb-lin-DACS}.
\end{proof}
\begin{rem}\label{Rem:FL}
	(i) By conditions (EFL1) and (EFL2),  the locally maximal controlled invariant submanifold $M^*$ around $x_a$ is a neighborhood $U$ of $x_a$. So condition (EFL3) is actually, satisfied if and only if conditions (FL1)-(FL3) are satisfied on $M^*=U$, i.e., locally around $x_a$.

	(ii) Note that when applying the geometric reduction method of Definition \ref{Def:gr} to a linear DACS $\Delta^u=(E,H,L)$, we get a sequence of subspaces $\mathscr V_i=M_i$, which is actually  the augmented Wong sequence  $\mathscr V_i$ defined by (\ref{Eq:Vchap4}). Thus the locally maximal controlled invariant submanifold $M^*$ is a nonlinear generalization of the limit  $\mathscr V^*$ of $\mathscr V_i$.
	So condition (EFL2) together with condition $\hat{\mathcal D}_{n^*}=\mathcal D_{n^*}=TM^*$ of (FL2) are the nonlinear counterparts of condition $\mathscr V^*\cap \mathscr W^*=\mathbb R^n$ of Lemma~\ref{Lem:controllability}, which assures that the linearized DACS is completely controllable.   The  sequences of distributions $\mathcal D_i$ and $\hat{\mathcal D}_i$ can thus be seen as nonlinear generalizations of the augmented Wong sequence $\mathscr W_i$  of  (\ref{Eq:Wchap4}) and the sequence $\hat{\mathscr W}_i$ of (\ref{Eq:Wchap4addition}), respectively. 
	
	(iii) If $E(x)=I_n$, a DACS $\Xi^u=(E,F,G)$ becomes an ODECS of the form (\ref{Eq:ODEs}). Suppose that $G(x)=\left[ \begin{smallmatrix}
		g_1(x)&\ldots&g_m(x)
	\end{smallmatrix}\right] $ is of constant rank. We have that conditions   (EFL1)-(EFL2) of Theorem \ref{Thm:CEFL} are clearly satisfied and that condition (EFL3) reduces to the feedback linearizability conditions in the classical sense. Indeed, we have $\Xi^u\in\mathbf{Expl}(\Xi^u|_{M^*})=\mathbf{Expl}(\Xi^u)$ because $\Xi^u$ with $E(x)=I_n$ is already an ODECS. Thus the vector of driving variables $v$ is absent and the two linearizability distributions $\mathcal D_i$ and  $\hat{\mathcal D}_i$ satisfy $\hat{\mathcal D}_{i+1}=\mathcal D_i$ for $i\ge 1$. Hence conditions (FL1)-(FL3) become (FL1)' $\mathcal D_i$ are of constant rank for $1\le i\le n$; (FL2)' $\dim\mathcal D_n=n$; (FL3)'  $\mathcal D_i$ are involutive for $1\le i\le n-1$, which are the feedback linearizability conditions for  classical  nonlinear (ODE) control systems, see e.g., \cite{Jakubczyk&Respondek1980,Hunt&Su1981,Isidori:1995:NCS:545735,nijmeijer1990nonlinear}. 
\end{rem}
\section{Examples}\label{Sec:5} 
\begin{exa}
	Consider the following academic example borrowed from \cite{Berger2016zero}. For a DACS $\Xi^u$, defined on $X=\mathbb R^3$, given by
	\begin{align}\label{Eq:exa3}
		\left[ 	{\begin{matrix}
				x_2&x_1&0\\
				0&0&0\\
				1&0&1
		\end{matrix}}\right] \ \left[\begin{matrix}
			\dot x_1\\
			\dot x_2\\
			\dot x_3
		\end{matrix}\right]=\left[ 	{\begin{matrix}
				0\\
				0\\
				(x_2)^2-(x_1)^3+x_3
		\end{matrix}}\right]+\left[\begin{matrix}
			1&-1\\
			1&1\\
			0&0
		\end{matrix}\right] \ \left[\begin{matrix}
			u_1\\
			u_2
		\end{matrix}\right],
	\end{align}
	where $u=(u_1,u_2)$, we fix an admissible  point $x_a=(x_{1a}, x_{2a}, x_{3a})=(1,0,0)\in X.$  Clearly, there exists a neighborhood $U$  ($x_1\ne0$ for all $x\in U$) of $x_a$ such that conditions (EFL1) and (EFL2) of Theorem \ref{Thm:CEFL} are satisfied. Subsequently, via
	$
	Q=\left[ 	{\begin{smallmatrix}
			1&1&0\\
			0&0&1\\
			0&1&0
	\end{smallmatrix}}\right]$ and $ \left[\begin{smallmatrix}
		u_1\\
		u_2
	\end{smallmatrix}\right]=\left[\begin{smallmatrix} 
		1&0\\
		-1&1
	\end{smallmatrix}\right]\left[\begin{smallmatrix}
		\tilde u_1\\
		\tilde u_2
	\end{smallmatrix}\right]$,
	the DACS $\Xi^u$ is ex-fb-equivalent to
	\begin{align*}
		\left[ 	{\begin{matrix}
				x_2&x_1&0\\
				1&0&1\\
				0&0&0
		\end{matrix}}\right] \ \left[\begin{matrix}
			\dot x_1\\
			\dot x_2\\
			\dot x_3
		\end{matrix}\right]=\left[ 	{\begin{matrix}
				0\\
				(x_2)^2-(x_1)^3+x_3\\
				0
		\end{matrix}}\right]+\left[\begin{matrix}
			2&0\\
			0&0\\
			0&1
		\end{matrix}\right] \ \left[\begin{matrix}
			\tilde u_1\\
			\tilde u_2
		\end{matrix}\right].
	\end{align*}
	Observe that the locally maximal invariant submanifold $M^*=U$ and
	\begin{align*}
		\Xi^u|_{M^*}\!=\!\Xi^u|_{U}:\left[ 	{\begin{matrix}
				x_2&x_1&0\\
				1&0&1
		\end{matrix}}\right]   \left[\begin{matrix}
			\dot x_1\\
			\dot x_2\\
			\dot x_3
		\end{matrix}\right]\!=\!\left[ 	{\begin{matrix}
				0\\
				(x_2)^2-(x_1)^3+x_3
		\end{matrix}}\right]\!+\!\left[\begin{matrix}
			2\\
			0
		\end{matrix}\right] u^*,
	\end{align*}
	where $u^*=\tilde u_1$.	Now an ODECS  $\Sigma^{u^*v}\in \mathbf{Expl}(\Xi^u|_{M^*})$ {can be taken as}
	\begin{align*}
		\Sigma^{u^*v}:\left[\begin{matrix}
			\dot x_1\\
			\dot x_2\\
			\dot x_3
		\end{matrix}\right]=\left[ 	{\begin{matrix}
				0\\
				0\\
				(x_2)^2-(x_1)^3+x_3
		\end{matrix}}\right]+\left[ 	{\begin{matrix}
				0\\
				2/x_1\\
				0
		\end{matrix}}\right]u^*+\left[ 	{\begin{matrix}
				x_1\\
				-x_2\\
				-x_1
		\end{matrix}}\right]v,
	\end{align*}
	where $v$ is {a} driving variable. We calculate the distributions $\mathcal D_i$ and $\hat{\mathcal D}_i$ for {the} system $\Sigma^{u^*v}$ to get
	\begin{align*} 
		\hat{\mathcal D}_1={\rm span}\left\lbrace g^v\right\rbrace , \quad \mathcal D_1={\rm span}\left\lbrace g^{u^*},g^v\right\rbrace, \quad \mathcal D_2=\hat{\mathcal D}_2={\rm span}\left\lbrace g^{u^*},g^v,ad_f g^v\right\rbrace, 
	\end{align*}
	where $$
	g^v\!=\!\left[ 	{\begin{matrix}
			x_1\\
			-x_2\\
			-x_1
	\end{matrix}}\right], \quad g^{u^*}\!=\!\left[ 	{\begin{matrix}
			0\\
			2/x_1\\
			0
	\end{matrix}}\right], \quad ad_f g^v\!=\!\left[ 	{\begin{matrix}
			0\\
			0\\
			3(x_1)^3+2(x_2)^2+x_1
	\end{matrix}}\right].$$ 
	Clearly,  the distributions above are {of} constant {rank} and $\mathcal D_2=\hat{\mathcal D}_2=T_xU$ for all $x\in U$.  Additionally, $[g^{u^*},g^v]=0\in \mathcal D_1$ and $\hat{\mathcal D}_1$ is of rank one, {so} the distributions $\hat{\mathcal D}_1$, $\mathcal D_1$, $\hat{\mathcal D}_2$ are all involutive. Thus,   condition (EFL3) of Theorem \ref{Thm:CEFL} is satisfied. Therefore, system $\Xi^u$ is externally feedback linearizable.
	
	In fact, we can choose $\varphi^{u^*}(x)$ and $ \varphi^{v}(x)$ such that 
	\begin{align*}
		{\rm span}\left\lbrace d\varphi^v\right\rbrace =\mathcal D_1^{\bot}, \ \ \ \ {\rm span}\left\lbrace d\varphi^v, d\varphi^{u^*}\right\rbrace =\hat{\mathcal D}_1^{\bot}.
	\end{align*}
	Furthermore, use the  following coordinates change and feedback transformation  (note that the  feedback transformation below has a triangular form as we discussed  in Remark \ref{rem:fb-sys-eq})
	\begin{align*}
		\begin{array}{c}
			\xi =\varphi^{u^*}(x)=x_1x_2, ~~~ z_1=\varphi^{v}(x)=x_1+x_3,  ~~~z_2 =L_f\varphi^v(x)=-(x_1)^3+(x_2)^2+x_3, 
	\\ \left[ \begin{matrix}
				\tilde u^*\\
				\tilde v
			\end{matrix}\right] = \left[ \begin{matrix}
				2&0\\
				\frac{4x_2}{x_1}&-3(x_1)^3-x_1-2(x_2)^2
			\end{matrix}\right] \left[ \begin{matrix}
				u^*\\
				v
			\end{matrix}\right] +   \left[ \begin{matrix}
				0\\
				(x_2)^2-(x_1)^3+x_3
			\end{matrix}\right],
		\end{array}
	\end{align*}
	the system $\Sigma^{uv}$ becomes
	\begin{align*}
		\Lambda^{\tilde u^* \tilde v}: 
			\dot \xi=\tilde u^*, \ \
			\dot z_1=z_2,\ \
			\dot z_2=\tilde v.  
	\end{align*}
	Now by Theorem \ref{Thm:mainnonDACS},  $\Xi^u|_{M^*}$ is ex-fb-equivalent to the following linear DACS  
	\begin{align*}
		\Delta^{\tilde u^*}:\left[ 	{\begin{matrix}
				1&0&0\\
				0&1&0
		\end{matrix}}\right] \ \left[\begin{matrix}
			\dot \xi\\
			\dot z_1\\
			\dot z_2
		\end{matrix}\right]=\left[ 	{\begin{matrix}
				0&0&0\\
				0&0&1
		\end{matrix}}\right]\left[\begin{matrix}
			\xi\\
			z_1\\
			z_2
		\end{matrix}\right]+\left[\begin{matrix}
			1\\
			0
		\end{matrix}\right]\tilde u^*,
	\end{align*}
	since $\Sigma^{u^*v}\in \mathbf{Expl}(\Xi^u|_{M^*})$, $\Lambda^{\tilde u^* \tilde v} \in \mathbf{Expl}(\Delta^{\tilde u^*})$, and  $\Sigma^{u^*v}\mathop\sim\limits^{sys-fb}\Lambda^{\tilde u^* \tilde v} $.	Therefore,  the original DACS $\Xi^u$ is ex-fb-equivalent to the following completely controllable linear DACS:
	\begin{align*}
		\left[ 	{\begin{matrix}
				1&0&0\\
				0&1&0\\
				0&0&0
		\end{matrix}}\right] \ \left[\begin{matrix}
			\dot \xi\\
			\dot z_1\\
			\dot z_2
		\end{matrix}\right]=\left[ 	{\begin{matrix}
				0&0&0\\
				0&0&1\\
				0&0&0
		\end{matrix}}\right]\left[\begin{matrix}
			\xi\\
			z_1\\
			z_2
		\end{matrix}\right]+\left[\begin{matrix}
			1&0\\
			0&0\\
			0&1
		\end{matrix}\right] \ \left[\begin{matrix}
			\tilde u^*\\
			\tilde u_2
		\end{matrix}\right] 
	\end{align*}
	via $$Q=\left[ 	{\begin{matrix}
			1&1&0\\
			0&0&1\\
			0&1&0
	\end{matrix}}\right], \quad\quad
		\left[ \begin{matrix}
			\xi\\
			z_1\\
			z_2
		\end{matrix}\right]=\left[ \begin{matrix}
			x_1x_2\\
			x_1+x_3 \\
			-(x_1)^3+(x_2)^2+x_3
		\end{matrix}\right], \quad\quad \left[\begin{matrix}
			u_1\\
			u_2
		\end{matrix}\right]=\left[\begin{matrix} 
			1/2&0\\
			-1&1
		\end{matrix}\right]\left[\begin{matrix}
			\tilde u^*\\
			\tilde u_2
		\end{matrix}\right]. $$
\end{exa}
\begin{exa}\label{Ex:1}
	Consider the  model of a 3-link manipulator  \cite{arai1998nonholonomic} with active joints $1$ and   $2$, and a passive joint~3 (see Figure~\ref{Fig:1} below), the same model was used in \cite{chen2021nfs} to illustrate an applicable algorithm for the geometric reduction method, we will use it for the internal feedback linearization of DACSs in the present paper.  
	\begin{figure}[htp!]
		\newcommand{\nvar}[2]{%
			\newlength{#1}
			\setlength{#1}{#2}
		}
		
		\nvar{\dg}{0.1cm}
		\def\dw{0.25}\def\dh{0.5}
		\nvar{\ddx}{1.5cm}
		
		\def\link{\draw [double distance=1.5mm, very thick] (0,0)--}
		
		\def\joint{%
			\filldraw [fill=white] (0,0) circle (5pt);
			\fill[black] circle (2pt);
		}
		\def\grip{%
			\draw[ultra thick](0cm,\dg)--(0cm,-\dg);
		}
		\def\robotbase{%
			\draw[rounded corners=8pt] (-\dw,-\dh)-- (-\dw, 0) --
			(0,\dh)--(\dw,0)--(\dw,-\dh);
			\draw (-0.5,-\dh)-- (0.5,-\dh);
			\fill[pattern=north east lines] (-0.5,-1) rectangle (0.5,-\dh);
		}
		
		\newcommand{\angann}[2]{%
			\begin{scope}[black]
				\draw [dashed, black] (0,0) -- (1.2\ddx,0pt);
				\draw [->, shorten >=3.5pt] (\ddx,0pt) arc (0:#1:\ddx);
				\node at (#1/2-2:\ddx+8pt) {#2};
			\end{scope}
		}
		
		\newcommand{\lineann}[4][0.5]{%
			\begin{scope}[rotate=#2, black,inner sep=2pt]
				\draw[dashed, black!40] (0,0) -- +(0,#1)
				node [coordinate, near end] (a) {};
				\draw[dashed, black!40] (#3,0) -- +(0,#1)
				node [coordinate, near end] (b) {};
				\draw[|<->|] (a) -- node[fill=white] {#4} (b);
			\end{scope}
		}
		
		\def\thetaone{60}
		\def\Lone{2}
		\def\thetatwo{-90}
		\def\Ltwo{2}
		\def\thetathree{50}
		\def\Lthree{2}
		\begin{center}	
			\begin{tikzpicture}[scale=1]
				\robotbase
				\link(\thetaone:\Lone);
				\joint
				\draw  (-0.2,0)  node  [left] {joint $1$};
				\draw [<->,thick] (0,1) node (yaxis) [above] {$y$}
				|- (1,0) node (xaxis) [right] {$x$};
				\begin{scope}[shift=(\thetaone:\Lone), rotate=\thetaone]
					\link(\thetatwo:\Ltwo);
					\joint
					\draw  (0,0.2)  node  [above] {joint $2$};
					\begin{scope}[shift=(\thetatwo:\Ltwo), rotate=-\thetaone]
						\angann{50}{$\theta$}
						\lineann[0.7]{\thetathree}{1}{$l$}
						\link(\thetathree:\Lthree);
						\joint
						\draw  (0,-0.2)  node  [below] {joint $3$:$(x,y)$}; 
						\begin{scope}[shift=(\thetathree:\Lthree), rotate=\thetathree]
							\grip
						\end{scope}
					\end{scope}
				\end{scope}
			\end{tikzpicture}
		\end{center}
		\caption{A 3-link manipulator with a free joint}\label{Fig:1}
	\end{figure} 
	\\	The dynamic  equations of the manipulator are given by:
	\begin{align}\label{Eq:3-link} 
			\left\lbrace \begin{array}{r@{\,}l}
				m\ddot x-ml\sin\theta \ddot \theta-ml\dot \theta^2\cos \theta&=F_x,\\
				m\ddot y+ml\cos\theta \ddot \theta -ml\dot \theta^2\sin\theta&=F_y,\\
				-ml\sin \theta\ddot x+ml\cos \theta \ddot y+ml^2\ddot \theta&=\tau_\theta+F_f,
			\end{array}\right.  
	\end{align}
	where the mass $m$ and the half length of the free-link $l$ are constants, $x$ and $y$ are the position variables of the free joint, and $\theta$ is the angle between the base frame and the link frame, $F_x$ and $F_{y}$ are the translation force at the free joint in the direction of $x$ and $y$, respectively, and $\tau_{\theta}$ is the torque applied to the free joint (we take $\tau_{\theta}=0$ implying that joint 3 is free). We additionally consider the friction force  $F_f$  caused by the rotation of the free link.  We regard $(F_x,F_y)$ as the active control inputs  to the system. The friction force $F_f$ is  a generalized state variable rather than an active control input since we can not   change it arbitrarily.  
	We consider system (\ref{Eq:3-link}) subjected to  the following  constraint:
	\begin{align}\label{Eq:holcon}
		x-y=0.
	\end{align}
	We combine (\ref{Eq:3-link}) together with (\ref{Eq:holcon}) as a DACS $\Xi_{7,7,2}^u=(E,F,G)$ of the form
	\begin{align*}  
		\left[ 	{\begin{smallmatrix}
				1& 0& 0& 0&0&0& 0\\
				0& m& 0&0& 0& -ml\sin\theta_1 &0\\
				0& 0& 1& 0&0&0& 0\\
				0& 0& 0& m&0&ml\cos\theta_1& 0\\
				0& 0& 0& 0&1&0 &0\\
				0&- \sin \theta_1& 0& \cos \theta_1&0&l& 0\\
				0& 0& 0& 0&0&0&  0   
		\end{smallmatrix}}\right] \left[\begin{smallmatrix}
			\dot x_1\\
			\dot x_2\\
			\dot y_1\\
			\dot y_2\\
			\dot \theta_1\\
			\dot \theta_2\\
			\dot F_f
		\end{smallmatrix}\right] =   \left[ 	{\begin{smallmatrix}
				x_2\\
				ml\theta^2_2\cos \theta_1\\
				y_2\\
				ml\theta^2_2\sin \theta_1\\
				\theta_2\\
				\frac{F_f}{ml}\\
				x_1-y_1
		\end{smallmatrix}}\right]+\left[\begin{smallmatrix}
			0&0\\
			1&0\\
			0&0\\
			0&1\\ 
			0&0\\
			0&0\\
			0&0
		\end{smallmatrix}\right] \left[\begin{smallmatrix}
			F_x\\
			F_y
		\end{smallmatrix}\right].   
	\end{align*}
	For the DACS $\Xi^u$,  the generalized states $\xi=(x_{1},x_{2},y_{1},y_{2},\theta_{1},\theta_{2},F_f)\in X= \mathbb R^6\times S$ and the vector of control inputs is $(F_x,F_y)$.
	Consider $\Xi^u$ around a point $\xi_{p}=(x_{1p},x_{2p},y_{1p},y_{2p},\theta_{1p},\theta_{2p},F_{fp})=0$.
	The system $\Xi^u$ is \emph{not} locally externally feedback linearizable since  condition (EF2) of Theorem \ref{Thm:CEFL} is not satisfied around $\xi_p$. Now we apply the geometric reduction method of Definition \ref{Def:gr} to   get 
	$$
	\begin{aligned}
		M^c_0= (-\frac{\pi}{2},\frac{\pi}{2})\times \mathbb R^6, \ \ M^c_1=\left\lbrace \xi \in M^c_0\,|\,  x_1-y_1=0\right\rbrace, \ \
		M^c_2&=\left\lbrace \xi \in M^c_1\,|\,  x_2-y_2=0\right\rbrace,~~~M^c_3=M^c_2.
	\end{aligned}
	$$
	Thus by Proposition \ref{Pro:M^*}, $M^*=M^c_3=M^c_2$ is the locally maximal  controlled invariant submanifold around $x_p\in M^*$ (so $x_p$ is admissible).
 Choose new coordinates $\xi_2=(\tilde x_1,\tilde x_2)= (x_1-y_1,x_2-y_2)$ and keep the remaining coordinates $\xi_1=(y_1,y_2,\theta_1,\theta_2,F_f)$ unchanged, the system represented in the new coordinates is
	\begin{align*} 
		\left[ 	{\begin{smallmatrix}
				1& 0&0&0& 0&1&0\\
				0&m& 0&-ml\sin\theta_1&0&0& m\\
				1& 0&0&0& 0&0& 0\\
				0& m&0&ml\cos\theta_1& 0&0& 0\\
				0& 0&1&0 &0&0& 0\\
				0& \cos \theta_1-\sin\theta_1&0&l& 0&	0&- \sin \theta_1\\
				0& 0&0&0&  0 &	0& 0
		\end{smallmatrix}}\right] \left[\begin{smallmatrix}
			\dot y_1\\
			\dot y_2\\
			\dot \theta_1\\
			\dot \theta_2\\
			\dot F_f\\
			\dot {\tilde x}_1\\
			\dot {\tilde x}_2\\
		\end{smallmatrix}\right]  = \left[ 	{\begin{smallmatrix}
				\tilde x_2+y_2\\
				ml\theta^2_2\cos \theta_1\\
				y_2\\
				ml\theta^2_2\sin\theta_1 \\
				\theta_2\\
				\frac{F_f}{ml}\\
				\tilde x_1
		\end{smallmatrix}}\right]+\left[\begin{smallmatrix}
			0&0\\
			1&0\\
			0&0\\
			0&1\\ 
			0&0\\
			0&0\\
			0&0
		\end{smallmatrix}\right] \left[\begin{smallmatrix}
			F_x\\
			F_y
		\end{smallmatrix}\right]. 
	\end{align*}
	Set  $\xi_2=(\tilde x_1,\tilde x_2)=0$ to get a DACS of the form (\ref{Eq:DACS})
	\begin{align*} 
		\left[ 	{\begin{smallmatrix}				
				1& 0&0&0& 0\\
				0&m& 0& -ml\sin\theta_1&0\\
				1& 0&0&0& 0\\
				0& m&0&ml\cos\theta_1& 0\\
				0& 0&1&0 &0\\
				0& \cos \theta_1-\sin\theta_1&0&l& 0\\  
		\end{smallmatrix}}\right] \left[\begin{smallmatrix}
			\dot y_1\\
			\dot y_2\\
			\dot \theta_1\\
			\dot \theta_2\\
			\dot F_f
		\end{smallmatrix}\right] = \left[ 	{\begin{smallmatrix}
				y_2\\
				ml\theta^2_2\cos \theta_1\\
				y_2\\
				ml\theta^2_2\sin \theta_1 \\
				\theta_2\\
				\frac{F_f}{ml}
		\end{smallmatrix}}\right]+\left[\begin{smallmatrix}
			0&0\\
			1&0\\
			0&0\\
			0&1\\ 
			0&0\\
			0&0
		\end{smallmatrix}\right] \left[\begin{smallmatrix}
			F_x\\
			F_y
		\end{smallmatrix}\right]. 
	\end{align*}
	By  using $Q(\xi_1)$ and the   feedback transformations defined on $M^*$ as
	$$
	\begin{aligned}
		Q(\xi_1)&=\left[ \begin{smallmatrix}
			1&0&0&0&0&0\\
			0&0&0&1&0&0\\
			0&0&0&0&1&0\\
			0&1&0&0&0&0\\
			0&\sin\theta_1&0&-\cos\theta_1&0&m \\
			1&0&-1&0&0&0\\
		\end{smallmatrix}\right], \ \ \ \  
		\left[ \begin{smallmatrix}
			u_1\\
			u_2
		\end{smallmatrix}\right] &= \left[ \begin{smallmatrix}
			0\\
			F_f/l
		\end{smallmatrix}\right]+ \left[ \begin{smallmatrix}
			1&0\\
			\sin\theta_1&-\cos\theta_1
		\end{smallmatrix}\right] \left[ \begin{smallmatrix}
			F_x\\
			F_y
		\end{smallmatrix}\right],
	\end{aligned}
	$$
	we bring the system into
	\begin{align*}
		\left[ 	{\begin{smallmatrix}
				1& 0&0&0& 0\\
				0& m&0&ml\cos\theta_1& 0\\
				0& 0&1&0 &0\\
				0&m& 0& -ml\sin\theta_1&0\\
				0& 0&0&0& 0\\ 
				0& 0&0&0& 0\\  
		\end{smallmatrix}}\right] \left[\begin{smallmatrix}
			\dot y_1\\
			\dot y_2\\
			\dot \theta_1\\
			\dot \theta_2\\
			\dot F_f
		\end{smallmatrix}\right] =  \left[ 	{\begin{smallmatrix}
				y_2\\
				ml\theta^2_2\sin \theta_1+\frac{F_f}{l}\sec\theta_1 \\
				\theta_2\\
				ml\theta^2_2\cos \theta_1\\
				0\\
				0
		\end{smallmatrix}}\right]+\left[\begin{smallmatrix}
			0&0\\
			\tan\theta_1&-\sec\theta_1\\
			0&0\\
			1&0\\ 
			0&1\\
			0&0
		\end{smallmatrix}\right] \left[\begin{smallmatrix}
			u_1\\
			u_2
		\end{smallmatrix}\right]. 
	\end{align*}
	The local $M^*$-restriction $\Xi^u|_{{M^*}}=(E^*,F^*,G^*)$ by Definition \ref{Def:restrictionDACS} (compare Example 5.1 of \cite{chen2021nfs}) is    
	\begin{align}\label{Eq:rsys}
		\left[ 	{\begin{smallmatrix}
				1& 0&0&0& 0\\
				0& m&0&ml\cos\theta_1& 0\\
				0& 0&1&0 &0\\
				0&m& 0& -ml\sin\theta_1&0 
		\end{smallmatrix}}\right] \left[\begin{smallmatrix}
			\dot y_1\\
			\dot y_2\\
			\dot \theta_1\\
			\dot \theta_2\\
			\dot F_f
		\end{smallmatrix}\right] =   \left[ 	{\begin{smallmatrix}
				y_2\\
				\frac{F_f}{l}\sec\theta_1+ml\theta^2_2\sin\theta_1 \\
				\theta_2\\
				ml\theta^2_2\cos \theta_1 
		\end{smallmatrix}}\right]+\left[\begin{smallmatrix}
			0\\
			\tan\theta_1\\ 
			0\\
			1
		\end{smallmatrix}\right]u_1.
	\end{align}
	An explicitation system $\Sigma^{u_1v}\in \mathbf{Expl}(\Xi^u|_{M^*})$ can be chosen as 
	\begin{align*} 
		\left[\begin{smallmatrix}
			\dot y_1\\
			\dot y_2\\
			\dot \theta_1\\
			\dot \theta_2\\
			\dot F_f
		\end{smallmatrix}\right] =\left[ 	{\begin{smallmatrix}
				y_2\\
				\frac{F_f\tan\theta_1+ml^2\theta_2^2}{ml(\cos\theta_1+\sin \theta_1)} \\
				\theta_2\\  \frac{F_f\sec\theta_1+ml^2\theta_2^2(\sin \theta_1-\cos\theta_1)}{ml^2(\cos\theta_1+\sin \theta_1)}\\
				0 
		\end{smallmatrix}}\right]+ \left[\begin{smallmatrix}
			0\\
			\frac{\sec\theta_1}{m(\cos\theta_1+\sin \theta_1)}\\
			0\\ 
			\frac{\tan\theta_1-1}{ml(\cos\theta_1+\sin \theta_1)}\\
			0
		\end{smallmatrix}\right]  
		u_1+\left[\begin{smallmatrix}
			0\\
			0\\
			0\\ 
			0\\
			1
		\end{smallmatrix}\right]  
		v. 
	\end{align*} 
	Define a new control 
	$$
	u^*:=	\begin{matrix}\frac{F_f\tan\theta_1+ml^2\theta_2^2}{ml(\cos\theta_1+\sin \theta_1)}+	\frac{\sec\theta_1}{m(\cos\theta_1+\sin \theta_1)}\end{matrix}u_1.
	$$
	Then the system $\Sigma^{u_1v}$ under the new control is 	$\Sigma^{u^*v}=(f,g^{u^*},g^{v})$:
	\begin{align*}
		\begin{array}{c}
			\left[\begin{smallmatrix}
				\dot y_1\\
				\dot y_2\\
				\dot \theta_1\\
				\dot \theta_2\\
				\dot F_f
			\end{smallmatrix}\right] \!=\!\left[ 	{\begin{smallmatrix}
					y_2\\
					0 \\
					\theta_2\\  
					\frac{F_f}{ml^2}\\
					0 
			\end{smallmatrix}}\right]+\left[\begin{smallmatrix}
				0\\
				1\\
				0\\ 
				\frac{1}{l}(\sin\theta_1-\cos\theta_1)\\
				0
			\end{smallmatrix}\right]  
			u^*+\left[\begin{smallmatrix}
				0\\
				0\\
				0\\ 
				0\\
				1
			\end{smallmatrix}\right]  
			v.
		\end{array}
	\end{align*} 
	Now calculate the distributions $\mathcal D_i$ and $\hat{\mathcal D}_i$ for {the} system $\Sigma^{u^*v}$ to get
	$ 
		\hat{\mathcal D}_1={\rm span}\left\lbrace g^v\right\rbrace$,  $
		\mathcal D_1={\rm span}\left\lbrace g^{u^*},g^v\right\rbrace$, 
		$\hat{\mathcal D}_2={\rm span}\left\lbrace g^{u^*},g^v,ad_f g^v\right\rbrace$, $\mathcal D_2={\rm span}\left\lbrace g^v,g^{u^*},ad_f g^v,ad_f g^{u^*}\right\rbrace$, $\mathcal D_3=\hat{\mathcal D}_2=TM^*$. 
	where   $$ 
	\begin{aligned}
g^v=\frac{\partial}{\partial F_f}, \quad ad_f g^v= -\frac{1}{ml^2}\frac{\partial}{\partial \theta_2}, \quad g^{u^*}=\frac{\partial}{\partial y_2}+\frac{1}{l}(\sin\theta_1-\cos\theta_1)\frac{\partial}{\partial \theta_2},\\ ad_f g^{u^*}=  -\frac{\partial}{\partial y_1}- \frac{1}{l}(\sin\theta_1-\cos\theta_1)\frac{\partial}{\partial \theta_1}+\frac{1}{l}(\sin\theta_1+\cos\theta_1)\theta_2\frac{\partial}{\partial \theta_2}.
	\end{aligned}
$$ 
	Clearly,  the distributions above are {of} constant {rank} and  are all involutive around $\xi_p$. Thus,  conditions (FL1)-(FL3) of Theorem~\ref{Thm:IFL} are satisfied. Therefore, system $\Xi^{u}$ is locally internally feedback linearizable around $\xi_p$. Indeed,  choose $\varphi^{u^*}(x)$ and $ \varphi^v(x)$ such that 
	\begin{align*}
		{\rm span}\left\lbrace d\varphi^v\right\rbrace =\mathcal D_2^{\bot}, \ \ \ \ {\rm span}\left\lbrace d\varphi^v, d\varphi^{u^*}\right\rbrace =\hat{\mathcal D}_2^{\bot}.
	\end{align*}
	Then define the  following coordinates change and feedback transformation (which has a triangular form as desired):  
	$$
	\begin{array}{c}
		\tilde y_1=\varphi^v(\xi_1)=y_1-l\int a(\theta_1)\rd \theta_1, \quad 
		\tilde y_2=L_f\varphi^v(\xi_1)=y_2- la(\theta_1)\theta_2, 
		\quad  \tilde F_f=L^2_f\varphi^v(\xi_1)=-a(\theta_1)F_f-a'(\theta_1)l\theta^2_2,\\
		\tilde \theta_1=\varphi^{u^*}(\xi_1) =\theta_1, \quad
		\tilde \theta_2=L_f\varphi^{u^*}(\xi_1)=\theta_2, \quad  
		\left[ \begin{smallmatrix}
			\tilde u^*\\
			\tilde v
		\end{smallmatrix}\right]= \left[ \begin{smallmatrix}
			\frac{1}{l}(\sin\theta_1-\cos\theta_1) &0\\
			-2a'(\theta_1)(\sin\theta_1-\cos\theta_1)\theta_2	&-a(\theta_1)
		\end{smallmatrix}\right]\left[ \begin{smallmatrix}
			u^*\\
			v
		\end{smallmatrix}\right]   +\left[ \begin{smallmatrix}
			\frac{F_f}{ml^2}\\
			-3a'(\theta_1)\theta_2F_f-a''(\theta_1)\theta^3_2l
		\end{smallmatrix}\right], 
	\end{array}
$$
	where $a(\theta_1)=\frac{1}{\sin\theta_1-\cos\theta_1}$, $a'(\theta_1)=\frac{\rd a(\theta_1)}{\rd \theta_1}$, $a''(\theta_1)=\frac{\rd^2 a(\theta_1)}{\rd \theta_1^2}$. We transform   $\Sigma^{u^*v}$  into a linear control system in the Brunovsk{\'y} form
	$$
		\Lambda^{\tilde u^*\tilde v}:  \left\lbrace \begin{matrix}
			\dot {\tilde y}_1&=&\tilde y_2,\\
			\dot {\tilde y}_2&=&\tilde F_f,\\
			\dot {\tilde F}_f&=&\tilde v,\\
			\dot {\tilde \theta}_1&=&\tilde \theta_2,\\
			\dot {\tilde \theta}_2&=&\tilde u^*.
		\end{matrix}  \right. 
$$
	Thus by Theorem \ref{Thm:mainnonDACS}, the restriction $\Xi^u|_{M^*}$, given by (\ref{Eq:rsys}), is locally ex-fb-equivalent to the following completely controllable linear DACS $\Delta^{\tilde u^*}$,
	\begin{align*}
		\Delta^{\tilde u^*}:\left[ 	{\begin{matrix}
				1&0&0&0&0\\
				0&1&0&0&0\\
				0&0&0&1&0\\
				0&0&0&0&1\\
		\end{matrix}}\right]   \left[\begin{matrix}
			\dot {\tilde y}_1\\
			\dot {\tilde y}_2\\
			\dot {\tilde F}_f\\
			\dot {\tilde \theta}_1\\
			\dot {\tilde \theta}_2
		\end{matrix}\right]=\left[ 	{\begin{matrix}
				0&1&0&0&0\\
				0&0&1&0&0\\
				0&0&0&0&1\\
				0&0&0&0&0\\
		\end{matrix}}\right]\left[\begin{matrix}
			{\tilde y}_1\\
			{\tilde y}_2\\
			{\tilde F}_f\\
			{\tilde \theta}_1\\
			{\tilde \theta}_2
		\end{matrix}\right]+\left[\begin{matrix}
			0\\
			0\\
			0\\
			0\\
			1
		\end{matrix}\right]\tilde u^*.
	\end{align*}
	because $\Sigma^{u^*v}\overset{sys-fb}{\sim}\Sigma^{u_1v}\in \mathbf{Expl}(\Xi^u|_{M^*})$,  $\Lambda^{\tilde u^* \tilde v} \in \mathbf{Expl}(\Delta^{\tilde u})$  and  $\Sigma^{u^*v}\mathop\sim\limits^{sys-fb}\Lambda^{\tilde u^* \tilde v} $.
	Hence the  DACS $\Xi^u$ is locally in-fb-equivalent to the  linear  DACS $\Delta^{\tilde u^*}$, i.e., $\Xi^u$ is locally internally feedback linearizable.
\end{exa}
\section{Conclusions and perspectives}\label{Sec:6} 
In this paper, we give necessary and sufficient conditions for the problem that when a nonlinear DACS is locally internally  or locally externally  feedback equivalent to a completely controllable linear DACS. The conditions are based on an ODECS constructed by
the explicitation with driving variables. Two examples are given to illustrate   how to externally or internally feedback linearize a nonlinear DACS.

A natural problem for future works is that of when a nonlinear DAE system is ex-fb-equivalent to a linear one which is not necessarily completely controllable. Actually, this problem is more involved than the problem of   external feedback linearization with complete controllability. Indeed, since in Theorem  \ref{Thm:CEFL}, the maximal controlled invariant submanifold $M^*$ on $U$ is $M^*=U$, it follows that the algebraic constraints are directly governed by some variables of $u$. Thus the in-fb-equivalence is very close to the ex-fb-equivalence. However, if $M^*\ne U$, then the algebraic constraints may affect the generalized state. Moreover, since the explicitation is defined up to a generalized output injection, it may happen that one system of the explicitation is feedback linearizable but another is not. The general feedback linearizability problem remains open and, in view of the above points, is challenging.

\section*{Appendix} 
\begin{proof}[Proof of Lemma \ref{cl:1}]
	For ease of notation, we drop the index ``$*$'' for $z^*$,  ${u^*}$, $v^*$ and $f^*$ of the system $\Sigma^{u^*v^*}_{n^*,m^*,s^*}$, that is, $\Sigma^{u^*v^*}$ becomes
	$$
	\Sigma^{uv}: \dot z=f(z)+g^u(z)u+g^v(z)v.
	$$	
	The admissible point $x_a$ in the $z$-coordinates will be denoted by $z_a$. We will only show the proof for the case that $$m^*=s^*=1,\ \ \ \ \ \ \rk [g^v(z_a)\ \ g^u(z_a)]=2.$$   The proof for the  general case (i.e., for any $m^*\ge1$ and $s^*\ge1$,  and for $\rk [g^v(z_a)\ \ g^u(z_a)]=m^*+s^*$) can be done in a similar fashion as that on page 233-238 of \cite{Isidori:1995:NCS:545735}  for the feedback linearization of nonlinear multi-inputs multi-outputs control systems. We now describe a procedure to construct a change of coordinates $\xi=\psi(z)$ and    a feedback transformation:
	\begin{align}\label{Eq:twofbDACS}
		\left[ {\begin{matrix}
				u\\
				v
		\end{matrix}} \right] = \left[ {\begin{matrix}
				{{\alpha^u}(z)}\\
				{{\alpha^v}(z)}
		\end{matrix}} \right] + \left[ {\begin{matrix}
				{{\beta^u}(z)}&0\\
				{\lambda(z)}&{{\beta^v}(z)}
		\end{matrix}} \right]\left[ {\begin{matrix}
				{\tilde u}\\
				{\tilde v}
		\end{matrix}} \right]
	\end{align}
	to transform $\Sigma^{uv}$ into its Brunovsk{\'y} {canonical} form, where $\beta^u,\beta^v, \alpha^u, \lambda,\alpha^v$ are scalar functions, and $\beta^u(z)$ and $\beta^v(z)$ are nonzero around $z_a$, notice that the designed feedback transformation (\ref{Eq:twofbDACS}) has a triangular form as in (\ref{Eq:trianfb}). Note that constructing (\ref{Eq:twofbDACS}) is equivalent to finding the inverse feedback transformation 
	\begin{align}\label{Eq:twofbDACS2}
		\left[ {\begin{matrix}
				\tilde u\\
				\tilde v
		\end{matrix}} \right] = \left[ {\begin{matrix}
				{{a^u}(z)}\\
				{{a^v}(z)}
		\end{matrix}} \right] + \left[ {\begin{matrix}
				{{b^u}(z)}&0\\
				{\tilde \lambda(z)}&{{b^v}(z)}
		\end{matrix}} \right]\left[ {\begin{matrix}
				{u}\\
				{v}
		\end{matrix}} \right].
	\end{align}
	where $$
	\begin{aligned}
		a^u=-(\beta^u)^{-1}\alpha^u, \quad a^v=(\beta^v)^{-1}\lambda(\beta^u)^{-1}\alpha^u-(\beta^v)^{-1}\alpha^v\quad
		b^u=(\beta^u)^{-1},\quad b^v=(\beta^u)^{-1}, \quad \tilde \lambda=-(\beta^v)^{-1}\lambda(\beta^u)^{-1}.
	\end{aligned}
	$$	
	Below we will search for functions $a^u$, $a^v$, $\tilde \lambda$, and nonzero functions $b^u$, $b^v$ to construct   (\ref{Eq:twofbDACS2}).

	Consider the two sequences of distributions $\mathcal D_i$ and $\hat{\mathcal D}_i$ for $\Sigma^{uv}$, given by (\ref{Eq:disD1}),  and define 
	$$
	\begin{aligned}
		\rho :=\max \left\lbrace i\in \mathbb N^+\,|\,  \hat{\mathcal D}_{i}\ne\mathcal D_{ i}\right\rbrace, \quad\quad \bar \rho :=\max\left\lbrace i\in \mathbb N^+\,|\,   \mathcal D_{ i-1}\ne \hat{\mathcal D}_{ i} \right\rbrace.
	\end{aligned}
	$$
	By $m^*=s^*=1$, it is seen that, for each $i\ge 1$,
	\begin{align}\label{Eq:dim}
		\begin{aligned}
			\dim\,  \mathcal D_{ i}-\dim \hat{\mathcal D}_{i}&=\left\lbrace\begin{matrix}
				0, \text{          if } \mathcal D_{ i}= \hat{\mathcal D}_{i}\\
				1, \text{          if } \mathcal D_{ i}\ne\hat{\mathcal D}_{i}
			\end{matrix} \right., \quad\quad \dim\, \hat{\mathcal D}_{ i}-\dim\, \mathcal D_{ i-1} &=\left\lbrace\begin{matrix}
				0, \text{          if } \hat{\mathcal D}_{ i}= \mathcal D_{ i-1}\\
				1, \text{          if }  \hat{\mathcal D}_{ i}\ne \mathcal D_{ i-1}
			\end{matrix} \right..
		\end{aligned}
	\end{align}
	It follows that $\rho+\bar \rho=n^*$. Then only two cases are possible:  either $\rho\ge \bar \rho$ or $\rho< \bar \rho$.

	Case 1: If $\rho\ge \bar \rho$, then we have 
	\begin{align*}
		\mathcal D_0\subsetneq \hat{\mathcal D}_{1}\subsetneq  \dots\subsetneq{\mathcal D}_{\bar \rho-1}\subsetneq  \hat{\mathcal D}_{\bar \rho}\subsetneq \mathcal D_{\bar \rho}=\hat{\mathcal D}_{\bar \rho+1}\subsetneq {\mathcal D}_{\bar \rho+1}=  \dots \subsetneq {\mathcal D}_{\rho-1}= \hat{\mathcal D}_{\rho}\subsetneq {\mathcal D}_{\rho}=\hat{\mathcal D}_{\rho+j}= {\mathcal D}_{\rho+j}, \ j>0.
	\end{align*} 
	It follows that ${\mathcal D}_{\rho}= {\mathcal D}_{n^*}=\hat{\mathcal D}_{n^*}$ Then by (FL2) of Theorem \ref{Thm:IFL}, we have ${\mathcal D}_{\rho}=TM^*$ and thus $\dim {\mathcal D}_{\rho}=n^*$. 
	By $\hat{\mathcal D}_{\rho}\subsetneq {\mathcal D}_{\rho}$ and  (\ref{Eq:dim}), we have $\dim \hat{\mathcal D}_{\rho}=n^*-1$.
	Now by the involutivity of $\hat{\mathcal D}_{\rho}$   (condition (FL3)),  we can choose a scalar function  $h^u(z)$ such that 
	\begin{align*}
		{\rm span}\left\lbrace dh^u\right\rbrace=\hat{\mathcal D}^{\bot}_{\rho}, 
	\end{align*}
	where $\hat{\mathcal D}_{\rho}^{\bot}$ denotes the annihilator of the distribution $\hat{\mathcal D}_{\rho}$. It follows that for all $z$ around $z_a$,
	\begin{align}\label{Eq:hu}
		\begin{array}{l}
			\left\langle {dh^u(z)},ad^i_fg^u(z) \right\rangle=0,  \ \ 0\le i \le  \rho-2,  \quad	\left\langle {dh^u(z)},ad^{\rho-1}_fg^u(z) \right\rangle\ne 0;   \quad\quad
			\left\langle {dh^u(z)},ad^i_fg^v(z) \right\rangle=0,  \ \ 0\le i \le  \rho-1.\\
		\end{array}
	\end{align} 
	Recall the following result \cite{Isidori:1995:NCS:545735}\cite{nijmeijer1990nonlinear}:
	\begin{align}\label{Eq:Leibniz}
		\begin{array}{ll}
			\left\langle {dh(z)},ad^i_fg(z) \right\rangle=0,  \  0\le i \le l-2   \Rightarrow  \left\langle {dh(z)},ad^{l-1}_fg(z) \right\rangle=(-1)^i\left\langle {dL^i_fh(z)},ad^{l-1-i}_fg(z) \right\rangle, \  0\le i \le l-1,
		\end{array}
	\end{align}
	where $h(z)$ is a scalar function, $f(z)$ and $g(z)$ are vector fields.
	
	It can be deduced from (\ref{Eq:hu}) and (\ref{Eq:Leibniz}) that for all $z$ around $z_a$,
	\begin{align}\label{Eq:hu1}
		\begin{array}{l}
			\left\langle {dL^i_fh^u(z)},ad^j_fg^u(z) \right\rangle=0, \   0\le i \le  \rho-2,\  0\le j\le \rho-i-2,	\quad
			\left\langle {dL^i_fh^u(z)},ad^{\rho-i-1}_fg^u(z) \right\rangle\ne 0, \   0\le i \le  \rho-2 ;  \\
			\left\langle {dL^i_fh^u(z)},ad^j_fg^v(z) \right\rangle=0, \ \ 0\le i \le  \rho-1,\  0\le j\le \rho-i-1.  
		\end{array}
	\end{align}
	By using (\ref{Eq:hu1}), we have the following table for the expressions of 
	$\left\langle {\rd L^i_fh^u},ad^{j}_fg^u \right\rangle$, $0\le i\le \rho-\bar \rho$, $\bar\rho-1\le j\le\rho-1$:
	$$
	\begin{smallmatrix}
		&ad^{\bar\rho-1}_fg^u& ad^{\bar\rho}_fg^u& \cdots& ad^{\rho-1}_fg^u\\
		\rd h^u&0&0&\cdots&\left\langle {\rd h^u},ad^{\rho-1}_fg^u \right\rangle\\
		\cdots&\cdots&\cdots&*&\\ 
		\rd L^{\rho-\bar\rho-1}_fh^u&0&\left\langle {\rd L^{\rho-\bar\rho-1}_fh^u},ad^{\bar\rho}_fg^u \right\rangle&  &\\
		\rd L^{\rho-\bar\rho}_fh^u& \left\langle {\rd L^{\rho-\bar\rho}_fh^u},ad^{\bar\rho-1}_fg^u \right\rangle&&  &?\\
	\end{smallmatrix}
	$$
	Notice that all the anti-diagonal elements of the above table are  nonzero by (\ref{Eq:hu1}).  It follows that the co-distribution  $$\Omega_1=\Span\left\lbrace dL^{i}_fh^u, \ \ 0\le i\le \rho-\bar \rho    \right\rbrace$$ is of dimension $\rho-\bar \rho+1$ around $z_a$. We have
	$\Omega_1 \subseteq \mathcal D^{\bot}_{\bar \rho-1} $ because 	
	$$ 
	\begin{aligned}
		\left\langle \rd L^i_fh^u(z) ,ad^{j}_fg^u(z) \right\rangle \overset{(\ref{Eq:hu1})}{=}  0,\ \ 0\le i\le\rho-\bar \rho, \ \ 0\le j\le \bar\rho-2,\\ \left\langle \rd L^i_fh^u(z) ,ad^{j}_fg^v(z) \right\rangle \overset{(\ref{Eq:hu1})}{=}  0, \ \ 0\le i\le\rho-\bar \rho, \ \ 0\le j\le \bar\rho-2. 
	\end{aligned}
	$$		
	It is seen that $\dim \mathcal{D}^{\bot}_{\bar \rho-1}-\dim \Omega_1=(n^*-(2\bar\rho-2))-(\rho-\bar\rho+1)=1$ and $\Omega_1 \subsetneq \mathcal D^{\bot}_{\bar \rho-1} $. Then by  the involutivity of ${\mathcal D}_{\bar \rho-1}$ (condition (FL3)), we can choose a scalar function $h^v(z)$ such that 
	\begin{align*}
		\Span\{\rd h^v\}+ \Omega_1=  {\mathcal D}^{\bot}_{\bar\rho-1},
	\end{align*}	
	which implies that for all $z$ around $z_a$,
	\begin{align}\label{Eq:hv}
		\begin{array}{l}
			\left\langle {dh^v(z)},ad^i_fg^u(z) \right\rangle=0,  ~ 0\le i \le  \bar \rho-2; \quad 
			\left\langle {dh^v(z)},ad^i_fg^v(z) \right\rangle=0,  ~ 0\le i \le  \bar \rho-2,\quad \left\langle {dh^v(z)},ad^{\bar \rho-1}_fg^v(z) \right\rangle\neq 0. 
		\end{array}
	\end{align} 
	It can be deduced by (\ref{Eq:hv}) and (\ref{Eq:Leibniz}) that for all $z$ around $z_a$,
	\begin{align}\label{Eq:hv1} 
		\begin{array}{l}
			\left\langle {dL^i_fh^v(z)},ad^j_fg^u(z) \right\rangle=0, ~~	 
			0\le i \le  \bar\rho-2,\ 0\le j\le \bar\rho-i-2 ;  \\
			\left\langle {dL^i_fh^v(z)},ad^j_fg^v(z) \right\rangle=0, ~0\le i \le  \bar\rho-2,\ \ 0\le j\le \bar\rho-i-2,  \quad
			\left\langle {dL^i_fh^v(z)},ad^{\bar\rho-i-1}_fg^v(z) \right\rangle\ne 0, ~ 0\le i \le  \bar\rho-2.  
		\end{array}
	\end{align}
	By using (\ref{Eq:hu1}) and (\ref{Eq:hv1}), we can construct the following table:  
		\begin{equation} \label{Eq:table1}
			\begin{smallmatrix}
				&g^v&g^u&\cdots&\cdots&ad^{\bar\rho-1}_fg^v&ad^{\bar\rho-1}_fg^u& ad^{\bar\rho}_fg^u& \cdots& ad^{\rho-1}_fg^u\\
				\rd h^u&0&0&\cdots&\cdots&0&0&0&\cdots&\left\langle {\rd h^u},ad^{\rho-1}_fg^u \right\rangle\\
				\cdots&\cdots&\cdots&\cdots&\cdots&\cdots&\cdots&\cdots&*&\\ 
				\rd L^{\rho-\bar\rho-1}_fh^u& 0&0&\cdots&\cdots&0&0&\left\langle {\rd L^{\rho-\bar\rho-1}_fh^u},ad^{\bar\rho}_fg^u \right\rangle&  &\\
				\rd L^{\rho-\bar\rho}_fh^u&0 &0&\cdots&\cdots&0&\left\langle {\rd L^{\rho-\bar\rho}_fh^u},ad^{\bar\rho-1}_fg^u \right\rangle&&  &?\\
				\rd h^v&0& 0&\cdots&\cdots&\left\langle {\rd h^v},ad^{\bar\rho-1}_fg^v \right\rangle&?&&  &\\
				\cdots& 0&0&\cdots&*~~~&  &\\
				\cdots& 0&0&*&?~~~&  &\\
				\rd L^{\rho-1}_fh^u&0&   L_{g^u }L^{\rho-1}_fh^u~~~~~~  &\\
				\rd L^{\bar \rho-1}_fh^v& L_{g^v} L^{\bar \rho-1}_f h^v  & ?  ~~~&?&&  &? 
			\end{smallmatrix}
		\end{equation} 
	Notice that all the anti-diagonal elements of table (\ref{Eq:table1}) are  nonzero. 	It follows  that the $(\rho+\bar\rho)\times (\rho+\bar\rho)=n^*\times n^*$ matrix 
	$$
	\frac{\partial \psi}{\partial z}(z)\left[\begin{matrix}
		g^v&g^u&\cdots&\cdots&ad^{\bar\rho-1}_fg^v&ad^{\bar\rho-1}_fg^u& ad^{\bar\rho}_fg^u& \cdots& ad^{\rho-1}_fg^u
	\end{matrix} \right](z)
	$$
	is invertible around $z_a$, where 
	\begin{align}\label{Eq:psi}
		\psi=(h^u,\dots,L^{\rho-1}_fh^u,h^v,\dots,L^{\bar \rho-1}_fh^v).
	\end{align}
	Thus the Jacobian matrix $\frac{\partial \psi(z)}{\partial z}$  is invertible around $z_a$ and $\psi$ is a local diffeomorphism. Then set
	\begin{align}\label{Eq:tfunc}
		\begin{array}{c}
			a^u(z) =L^{\rho}_fh^u(z), ~~~~ b^u(z)=L_{g^u}L^{\rho-1}_fh^u(z), ~~~~ a^v(z) =L^{\bar \rho}_fh^v(z),~~~~ b^v(z)=L_{g^v}L^{\bar \rho-1}_fh^v(z),~~~~ \tilde\lambda(z) =L_{g^u}L^{\bar \rho-1}_fh^v(z). 
		\end{array} 
	\end{align}
	Note that $b^u(z)$ and $b^v(z)$ are nonzero at $z_p$. It is seen that   $\Sigma^{u^*v^*}$ is mapped, via the coordinates transformations $\xi=(\xi_1,\xi_2)=\psi(z)$ and the feedback transformation (\ref{Eq:twofbDACS2}), into the Brunovsk{\'y} form $\Sigma^w_{Br}=\Sigma^{w^*}_{Br}$ of (\ref{Eq:bnskf}) with indices $\rho$ and $\bar \rho$. 
	
	Case 2: If $\rho<\bar \rho$, then we have $
	\mathcal D_0\subsetneq \hat{\mathcal D}_{1}\subsetneq  \dots \subsetneq \hat{\mathcal D}_{\rho}\subsetneq \mathcal D_{\rho}\subsetneq\hat{\mathcal D}_{ \rho+1}= {\mathcal D}_{ \rho+1}\subsetneq  \dots  = {\mathcal D}_{\bar \rho-1}\subsetneq\hat{\mathcal D}_{\bar \rho}= {\mathcal D}_{\bar\rho}=\hat{\mathcal D}_{\bar\rho+j}= {\mathcal D}_{\bar\rho+j}$, $ j>0$. It follows that $\hat{\mathcal D}_{\bar\rho}= {\mathcal D}_{\bar \rho}=\hat{\mathcal D}_{n^*}={\mathcal D}_{n^*}$. Then by (FL2) of Theorem \ref{Thm:IFL}, we have $\hat{\mathcal D}_{\bar \rho}=TM^*$ and thus $\dim \hat{\mathcal D}_{\bar \rho}=n^*$. 
	By ${\mathcal D}_{\bar\rho-1}\subsetneq \hat{\mathcal D}_{\bar\rho}$ and  (\ref{Eq:dim}), we have $\dim {\mathcal D}_{\bar\rho-1}=n^*-1$.
	Now by the involutivity of ${\mathcal D}_{\bar\rho}$   (condition (FL1)), we can choose a scalar function  $h^v(z)$ such that 
	\begin{align*}
		{\rm span}\left\lbrace dh^v\right\rbrace={\mathcal D}^{\bot}_{\bar\rho-1}. 
	\end{align*}
	Then following a similar proof as in Case 1, we can show that the  distribution
	$$\Omega_2=\Span\left\lbrace dL^{i}_fh^v, \ \ 0\le i\le \bar\rho- \rho  -1  \right\rbrace$$
	is of dimension $\rho-\bar\rho$ around $z_a$ and $\Omega_2\subsetneq \hat{\mathcal D}^{\bot}_{\rho}$. Notice that $\dim \hat{\mathcal D}^{\bot}_{\rho}=n^*-(2\rho-1)=\bar\rho -\rho+1$, we have $\dim \hat{\mathcal D}^{\bot}_{\rho}-\dim \Omega_2=1$. Thus by   the involutivity of $\hat{\mathcal D}_{\rho}$ (condition (FL2)), we can choose a scalar function $h^u(z)$ such that 
	\begin{align*}
		\Span\{\rd h^u\}+ \Omega_2=  \hat{\mathcal D}^{\bot}_{\rho}.
	\end{align*}	
	Then, similarly as in Case 1,  we   construct the following table:
 $$
		\begin{smallmatrix}
			&g^v&g^u&\cdots&\cdots&ad^{\rho-1}_fg^v&ad^{\rho-1}_fg^u& ad^{ \rho}_fg^v& \cdots& ad^{\bar\rho-1}_fg^v\\
			\rd h^v&0&0&\cdots&\cdots&0&0&0&\cdots&\left\langle {\rd h^v},ad^{\bar \rho-1}_fg^v \right\rangle\\
			\cdots&\cdots&\cdots&\cdots&\cdots&\cdots&\cdots&\cdots&*&?\\ 
			\rd L^{\bar \rho-\rho-1}_fh^v& 0&0&\cdots&\cdots&0&0&\left\langle {\rd L^{\bar \rho-\rho-1}_fh^v},ad^{\rho}_fg^v \right\rangle&  &\\
			\rd h^u&0& 0&\cdots&\cdots&0&\left\langle {\rd h^u},ad^{\rho-1}_fg^u \right\rangle&&  &\\
			\rd L^{\bar \rho-\rho}_fh^v&0 &0&\cdots&\cdots&\left\langle {\rd L^{\bar \rho-\rho}_fh^v},ad^{\rho-1}_fg^v \right\rangle~~~  &?&&&?\\
			\cdots& 0&0&\cdots&*~~~&  &\\
			\cdots& 0&0&*~~~&&  &\\
			\rd L^{\rho-1}_fh^u&0&   L_{g^u }L^{\rho-1}_fh^u~~~~~~  &\\
			\rd L^{\bar \rho-1}_fh^v& L_{g^v} L^{\bar\rho-1}_f h^v  & ?  &?&&  &?&&&?\\
		\end{smallmatrix}
		$$ 
	and show that all the anti-diagonal elements of the table are nonzero around $z_a$. Finally, we define a diffeomorphism $\psi$ and  functions $a^u$, $b^u$, $a^v$, $b^v$, $\tilde \lambda$ in the same form as   (\ref{Eq:psi}) and (\ref{Eq:tfunc}) of Case 1. 
	It is seen that   $\Sigma^{uv}$ can also be transformed into the Brunovsk{\'y} form $\Sigma^w_{Br}=\Sigma^{w^*}_{Br}$ of (\ref{Eq:bnskf}) with indices $\rho$ and $\bar \rho$ via the change of coordinates $\xi=\psi(z)$ and the feedback transformation (\ref{Eq:twofbDACS2}).  
\end{proof}

\section*{Acknowledgements} 
The author wants to thank  Witold Respondek (INSA de Rouen) and Stephan Trenn (University of Groningen) for several helpful discussions and suggestions.

%
 
\bibliographystyle{WileyNJD-AMA}
\bibliography{bibNFs} 


%

\end{document}